\newtheorem{theorem}{Theorem}
\newtheorem{lemma}[theorem]{Lemma}
\newtheorem{proposition}[theorem]{Proposition}
\newtheorem{remark}{Remark}
\DeclareMathOperator{\C}{\mathcal C}
\DeclareMathOperator{\LL}{LL}
\newcommand{\lng}{\langle}
\newcommand{\rng}{\rangle}
\newcommand{\lf}{\left}
\newcommand{\rg}{\right}
\newcommand{\R}{\mathbb R}
\newcommand{\N}{\mathbb N}
\newcommand{\Hy}{\mathcal H}
\newcommand{\bs}{\left(\begin{smallmatrix}}
\newcommand{\es}{\end{smallmatrix}\right)}
\newcommand{\NI}{\noindent}
\newcommand{\mn}{\mathbb{R}^n_{\ge+}}
\begin{document}

\title{The monotone extended second order cone and mixed complementarity problems}

\author{Y. Gao, S. Z. N\'emeth and R. Sznajder}

\maketitle

\begin{abstract}
In this paper, we study a new generalization of the Lorentz cone ${\cal L}^n_+$, called the Monotone Extended Second Order Cone (MESOC). We
investigate basic properties of MESOC including computation of its Lyapunov rank and proving its reducibility. Moreover, we show that in an
ambient space, a cylinder is an isotonic projection set with respect to MESOC. We also examine a Nonlinear Complementarity Problem on a cylinder, which is equivalent to a suitable Mixed Complementarity Problem and provide a computational example illustrating applicability of MESOC.
\end{abstract}

{\bf Keywords:} {Monotone extended second order cone, Lyapunov rank, Complementarity problems}

\section{Introduction}
In recent years, the second order cone ${\cal L}^n_+:= \{(x_0,x^{n-1}) \in \R \times \R^{n-1}: x_0 \geq \|x^{n-1}\|\}$, also known as the Lorentz cone, attracted much attention of the researchers in optimization, particularly in conic optimization. Many optimization problems can be reformulated as the conic ones. There are computationally stable numerical algorithms for solving various such problems, including complementarity problems. The literature on the subject is vast and commonly accessible, see e.g., Alizadeh and Goldfarb \cite{alizadeh-goldfarb}, and a recent work \cite{hao-wan-chi-jin}, by Hao et al., which is related to the Mixed Complementarity Problem over the second order cone.

The Lorentz cone has a particularly regular structure: it is a {\em self-dual} cone, whose base is isometric to the Euclidean unit ball in $\R^{n-1}$ and is {\em irreducible}. In the context of Euclidean Jordan algebras, ${\cal L}^n_+$ is a symmetric cone (of squares) in the spin algebra ${\cal L}^n$ it generates. We will not pursue this direction here.

There are several known important versions of the extended Lorentz cone, including {\em Bishop--Phelps cone} \cite{gowda-trott} and the Extended Second Order cone \linebreak (ESOC), which was recently developed by S. Z. N\'emeth and his co-authors, see \cite{nemeth-zhang1,nemeth-zhang2,nemeth-xiao1,nemeth-xiao2,nemeth-xie-zhang}. The {\em Lyapunov rank} of a cone $K$, denoted by $\beta(K)$ (see its definition in the next section) is an invariant which shows that the Lorentz cone and ESOC are generally not linearly isomorphic. It was introduced and studied by F. Alizadeh et al. in \cite{alizadeh-etal} under the name of {\em bilinearity rank}. The Lyapunov rank of ${\cal L}^n_+$ was computed in \cite{alizadeh-etal} and \cite{gowda-tao}, and equals $\frac{n^2-n}{2}$. Orlitzky, in \cite{orlitzky}, showed that the latter quantity is the maximum value the Lyapunov rank can attain for a proper cone in $\R^n$. Sznajder, in \cite{sznajder}, showed that the ESOC is irreducible and computed its Lyapunov rank, which is generally lower than $\frac{n^2-n}{2}$.

In this article, we study another extension of the Lorentz cone, called the {\em Monotone Extended Second Order Cone} (MESOC)
\cite{ferreira-gao-nemeth}. There are three main results related to MESOC
here:

\vspace{-.3cm}
\begin{itemize}
  \item[$\circ$]\, computing its Lyapunov rank, which turns out, in general, is much lower than the minimal upper bound indicated in \cite{orlitzky},
  \item[\mbox{}]\, \vspace{-1.1cm}
  \item[$\circ$]\, proving that MESOC (in contrast to ESOC) is a {\em reducible} cone,
  \item[\mbox{}] \vspace{-1.1cm}
  \item[$\circ$]\, showing that a closed convex set is an {\em isotonic
	  projection set} with respect to MESOC if and only if it is a
	  {\em cylinder} (in an ambient space).
\end{itemize}

In \cite{ferreira-gao-nemeth} an application of MESOC to Portfolio Optimization
has been presented and possible other applications have been suggested.

The paper is organized as follows: In Section 2, we collect the necessary definitions and provide examples of monotone cones. The main concept related to a cone $K$, on which the paper relies upon, is the {\em complementarity set of $K$}. In Section 3, for MESOC, we identify its dual space and investigate the structure of its complementarity set. We also formulate here and prove the results listed above. In Section 4, based on the work done in \cite{nemeth-zhang1} and \cite{nemeth-zhang2}, we study the properties of the Mixed Complementarity Problem (MiCP). By exploring the relationship of mixed complementarity problem and nonlinear complementarity problem derived in \cite{nemeth-zhang1}, and by using the isotonicity of MESOC obtained in Section 3, we generate a fixed point iteration sequence (called Picard iteration by some authors), which is convergent to a solution of the MiCP on a general closed and convex cone. The convergence of this iteration is order-based, rather than based on a usual contraction mapping principle, although the preprint \cite{banach-lorentz} and the example in the final section suggests that in certain situations it may be implicitly related to such a principle. This example is about a real MiCP example. We show the existence of a solution, in exact numbers, by using the above iteration.

\section{Preliminaries}

Denote the canonical unit vectors of $\R^n$ by $e^1,\ldots,e^n$ and let $e=e^1+\cdots +e^n$.
Any vector $z\in\mathbb R^n$ is considered to be a column vector and can be uniquely written as
$z=(z_1,\ldots, z_n)^\top:=z_1e^1+\cdots+z_ne^n$. In particular $e=(1,\ldots,1)^\top$.

The canonical {\em inner product} of any two vectors $x,y\in\R^n$ is defined as \[\langle
x,y\rangle:=x^\top y=x_1y_1+\cdots+x_ny_n.\] We identify $\R^p\times\R^q$ with $\R^{p+q}$ through $(x,y)=(x^\top,y^\top)^\top$.

We call the set
\begin{equation*}
	\Hy(u,\alpha):=\{x\in \R^n:\;\lng x,u\rng=\alpha\}
\end{equation*}
an \emph{affine hyperplane}
with the normal $u\in\R^n\setminus\{0\}$ and the corresponding sets
\[\Hy_-(u,\alpha):=\{x\in \R^n:\lng x,u\rng\le\alpha\},\]
\[\Hy_+(u,\alpha):=\{x\in \R^n:\lng x,u\rng\ge\alpha\},\]
\emph{closed half-spaces}.
An \emph{affine hyperplane} through the origin will be simply called \emph{hyperplane}.

A nonempty set $K\subseteq\R^n$ is a {\em cone} if for any $x\in K$ and $\forall \alpha > 0$, it holds $\alpha x \in K$. A set $K$ is a {\em convex cone} (i.e., cone $K$ is a convex set) if and only if for any $x,y\in K$ and $\forall \alpha,\beta > 0$, it holds $\alpha x + \beta y \in K.$

\NI A cone $K$ is called a {\em closed cone (pointed)} when it is a closed set ($K \cap -K =\{0\}$).

\NI The dual cone of a cone $K$ is given by
$$
K^* := \{y \in \R^n :\langle x, y\rangle\geq 0,  \forall x\in K\}.
$$

\NI We define the following set, which is vital for our further considerations
$$
C(K):= \{(x, y): x\in K, y\in K^*,\, x \perp y\},
$$
called the {\em complementarity set of $K$}, where $x\perp y$ means $\langle x,y\rangle = 0$.

A cone $K\subseteq\R^n$ is called \emph{simplicial} if there is a basis $\{u^i:1\le i\le n\}$ of
$\R^n$ such that
\[
K=\lf\{\alpha_1 u^1+\dots+\alpha_n u^n\,:\,\alpha_i\ge 0,\,1\le i\le n\rg\}.
\]
The vectors $u^i$, $1\le i\le n$ are called the \emph{generators} of $K$. It is known that the
dual of a simplicial cone is also simplicial.

We present two examples of complementarity sets, the second will be used later.

\NI {\bf Example 1.}
\, Define the {\em monotone cone} $\R_{\geq}^n$ as
$$
\R_{\geq}^n :=\{x\in\R^n:x_1\geq x_2\geq \cdots\geq x_n\}.
$$
It is easy to check that its dual cone $(\R_{\geq}^n)^*$ is given by
$$
(\R_{\geq}^n)^*=\left \{y\in\R^n: \sum_{i=1}^{j} y_j\geq 0,~ j=1,2,\ldots,n-1,~ \sum_{i=1}^{n}y_i=0 \right \}.
$$
It is an important object, also known as the {\em Schur cone} (see \cite{seeger-sossa}, Example 7.4), since it induces the so-called {\em Schur ordering}, which plays an important role in the theory of majorization, see \cite{marshall-olkin-arnold}.

\NI The complementarity set $C(\R_{\geq}^n)$ of the cone $\R_{\geq}^n$ is described as
\begin{eqnarray*}
C(\R_{\geq}^n)=\Big\{(x,y):x\in \R_{\geq},y\in (\R_{\geq}^n)^*,~ (x_i-x_{i+1})\sum_{j=1}^{i} y_j=0,\\~\forall i=1,2,\ldots,n-1\Big\}.
\end{eqnarray*}

\NI {\bf Example 2.} \label{example2}\,
We define the {\em monotone nonnegative cone} $\R_{\geq+}^n$ as:
$$\R_{\geq+}^n :=\{x\in\R^n: x_1\geq x_2 \geq\cdots\geq x_n\geq 0\}\,.$$
Its dual cone is given by:
$$(\R_{\geq+}^n)^*=\left \{y\in\R^n: \sum_{i=1}^{j}y_i\geq 0,\, j=1,2,\ldots,n\right \},$$
and the complementarity set of\, $\R_{\geq+}^n$ is equal to

\begin{eqnarray*}
C(\R_{\geq+}^n)=\left\{x\in\R_{\geq+}^n,y\in(\R_{\geq+}^n)^*: \Big(x_j=x_{j+1} \textrm{ or } \sum_{i=1}^{j}y_j=0\right., \\ \left.\textrm{ }\forall j=1,2,\ldots,n-1\Big),  \textrm{and}\Big(x_n=0 \textrm{\, or } \sum_{i=1}^{n}y_i=0\Big)\right\}.
\end{eqnarray*}
Both $\mn$ and $(\mn)^*$ are simplicial cones.

\bigskip

Recall \cite{nemeth-zhang1} that the extended second order cone ({\rm ESOC}) is defined by
\[
\textrm{ESOC}=\{(x,u)\in\R^p\times\R^q:\,x\geq\|u\|e\}
\]
and its dual cone is given as
\[
\textrm{(ESOC)}^*=\{(x,u)\in\R^p\times\R^q:\,\langle x,e\rangle\geq\|u\|,\,x\geq 0\},
\]
where $p$ and $q$ are nonnegative integers.

\NI A matrix $A\in\R^{n\times n}$ is called {\em Lyapunov-like} on $K$, if
\begin{equation} \label{lyap-like}
\langle Ax,y\rangle = 0, \text{ }\forall (x,y) \in C(K).
\end{equation}
Define a vector space $\LL(K)$ as the set of all Lyapunov-like matrices on $K$ and denote its dimension as $\beta(K)$, which we call the {\em Lyapunov rank} (or {\em bilinearity rank}) of $K$.

\NI For an arbitrary closed convex set $C \subseteq \mathbb{R}^m$, we define mapping $P_C$--{\em metric projection} onto $C$:
$$\mathbb{R}^m\ni x\mapsto P_Cx:=\arg\min\{\|y-x\| : y\in C\}.$$

\NI Necessarily, $P_C$ is a point--to--point mapping, which is well defined from $\mathbb{R}^m$ onto $C$.
 We also indicate that the projection $P_C$ is {\em nonexpansive}, i.e., for any $x,y\in\mathbb{R}^m$,
\begin{equation}\label{nonexpansivity}
 	\|P_C(x)-P_{C}(y)\|\leq\|x-y\|.
\end{equation}

\NI For any pointed closed convex cone $K\subset\mathbb{R}^m$, a mapping $F: \mathbb{R}^m \rightarrow \mathbb{R}^m$ is called
{\em $K$-isotone} if for any $x, y \in K$, $x\leq_{K} y$ implies $F(x)\leq_{K} F(y)$; here $x\leq_{K} y$ means $y-x \in K$.
If the projection $P_C$ is $K$-isotone, then the closed convex set $C \subseteq \mathbb{R}^m$ is called a {\em $K$-isotone projection set}.

Finally, for a proper closed convex cone $K \subset \mathbb{R}^m$ and a mapping $F: \mathbb{R}^m \rightarrow \mathbb{R}^m$ we define a {\em complementarity problem} CP$(K,F)$ as to find an $x^* \in K$ such that $F(x^*) \in K^*$ and $x^* \perp F(x^*)$. In other words, we seek an $x^*$ such that $(x^*,F(x^*)) \in C(K)$.

\section{Main Results}
The first topic we are interested in is the complementarity problem based on the monotone extended second order cone, which we introduce below.
Let $p$ and $q$ be two nonnegative integers.
{\em The monotone extended second order cone} (informally MESOC) is defined as follows:
 \begin{equation}
L:=\{(x,u)\in\mathbb{R}^p\times\mathbb{R}^q:x_1\geq x_2\geq\cdots\geq x_p\geq\|u\|\}\label{defmesoc}\,.
\end{equation}

\NI In order to find solutions of a complementarity problem, first we need
to find the dual cone and the complementarity set of this cone. Although
a considerable part of the characterization has been already presented in
\cite{ferreira-gao-nemeth}, for the sake of completeness, we decide to
include it here.
\begin{proposition}\label{pd}
Let $p$ and $q$ be two nonnegative integers.  Then the dual cone of a monotone
extended second order cone $L$ in \eqref{defmesoc} is
\begin{equation}\label{dualmesoc}
	{\small M:=\left
	\{(x,u)\in\mathbb{R}^p\times\mathbb{R}^q:\sum_{i=1}^{j}x_i\geq 0, \forall
	j\in\{1,\ldots,p-1\}, \sum_{i=1}^{p}x_i\geq\|u\| \right  \}},
\end{equation}
that is, $M=L^*$.
\end{proposition}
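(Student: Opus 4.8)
**The plan is to prove the two inclusions $M \subseteq L^*$ and $L^* \subseteq M$ separately**, using the definition of the dual cone and the Abel summation trick that naturally matches the monotone structure of $L$.

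For the inclusion $M \subseteq L^*$, I would take an arbitrary $(x,u) \in M$ and an arbitrary $(a,v) \in L$ (so $a_1 \geq \dots \geq a_p \geq \|v\|$), and show $\langle (x,u),(a,v)\rangle = \sum_{i=1}^p x_i a_i + \langle u,v\rangle \geq 0$. The key algebraic step is Abel's summation by parts: writing $S_j = \sum_{i=1}^j x_i$, one gets $\sum_{i=1}^p x_i a_i = \sum_{j=1}^{p-1} S_j (a_j - a_{j+1}) + S_p a_p$. Since $a_j - a_{j+1} \geq 0$ and $S_j \geq 0$ for $j \leq p-1$, the first sum is nonnegative. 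For the last term, $S_p a_p \geq \|u\| \cdot a_p \geq \|u\| \cdot \|v\| \geq -\langle u,v\rangle$ by Cauchy--Schwarz, using $S_p \geq \|u\| \geq 0$ and $a_p \geq \|v\| \geq 0$. Summing gives the desired inequality, so $(x,u) \in L^*$.

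For the reverse inclusion $L^* \subseteq M$, I would argue by contrapositive (or directly by exhibiting test vectors in $L$): suppose $(x,u) \notin M$. If some partial sum $S_j < 0$ for $j \leq p-1$, I test against the vector $a = e^1 + \dots + e^j \in \R^p$ paired with $v = 0$; one checks $a_1 \geq \dots \geq a_p \geq 0 = \|v\|$, so $(a,0) \in L$, yet $\langle (x,u),(a,0)\rangle = S_j < 0$, contradicting $(x,u) \in L^*$. If instead all $S_j \geq 0$ for $j \leq p-1$ but $S_p < \|u\|$, I test against $a = e = (1,\dots,1)^\top \in \R^p$ paired with $v = -\|u\|^{-1} u$ (assuming $u \neq 0$; if $u = 0$ the condition $S_p \geq \|u\|$ is just $S_p \geq 0$, handled by the previous test vectors too). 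Then $a_1 = \dots = a_p = 1 \geq 1 = \|v\|$, so $(a,v) \in L$, while $\langle (x,u),(a,v)\rangle = S_p - \|u\| < 0$, again a contradiction. Hence $(x,u) \in M$.

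The main obstacle, such as it is, is bookkeeping rather than conceptual: one must handle the boundary/degenerate cases cleanly — notably $p = 0$ or $q = 0$ (where $L$ reduces to $\R^p_{\geq+}$ or to the Lorentz-type cone), and the case $u = 0$ in the second inclusion — and make sure the chosen test vectors genuinely lie in $L$. I would state the Abel summation identity explicitly as a displayed equation and verify the membership of each test vector in $L$ in one line each, so that the chain of inequalities (monotonicity of the $a_i$, nonnegativity of the partial sums, Cauchy--Schwarz) is transparent. No deep machinery is needed; the proposition is essentially a verification, and it follows the same pattern as the dual computation for $\R^n_{\geq+}$ recorded in Example 2.
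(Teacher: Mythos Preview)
Your proposal is correct and follows essentially the same route as the paper: Abel summation (in the paper's notation, applied to $\langle x,y\rangle$) for the inclusion $M\subseteq L^*$, and the same test vectors $(e^1+\cdots+e^j,0)$ and $(e,-v/\|v\|)$ (the paper uses the rescaled $(\|v\|e,-v)$) for the reverse inclusion. One tiny wording fix: when $u=0$ and $S_p<0$, the relevant test vector is $(e,0)$, which is the $j=p$ case rather than one of the ``previous'' vectors with $j\le p-1$; the paper states this case separately, and you should too.
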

\begin{proof}\, First, we show that $M\subseteq L^*$. Let $(x,u)\in L$ and $(y,v)\in M$. Using Abel's summation formula, we have
\begin{eqnarray*}
\langle (x,u),(y,v)\rangle=
x^Ty+u^Tv= \sum_{i=1}^{p-1}(x_i-x_{i+1})\sum_{j=1}^{i}y_j+x_p\sum_{i=1}^{p}y_i+u^Tv\\\geq\|u\|\|v\|+u^Tv\geq 0.
\end{eqnarray*}
So, we have $M\subseteq L^*$. Now, we show that $L^*\subseteq M$. For, let $(y,v)\in L^*$ and $e=(1,1,\ldots,1) \in \R^p$. It is obvious that $(\|v\|e,-v) \in L$.
Suppose $v\neq 0$, then
$$ \langle (\|v\|e,-v), (y,v)\rangle \geq 0\Leftrightarrow \|v\|\sum_{i=1}^{p}y_i-\|v\|^2\geq 0.$$
Hence, $\displaystyle \sum_{i=1}^{p}y_i\geq \|v\|.$ When $v=0$, then\, $(e,0)\in L$ and $ (y,0)\in L^*$ imply that
$\displaystyle \sum_{i=1}^{p}y_i\geq 0=\|v\|.$

\NI We also have $\Big ((\underbrace{1,1,\ldots,1,0}_{i<p},\underbrace{0,\ldots,0}_{p-i}),(\underbrace{0,0,\ldots,0}_q)\Big)\in L$, and $(y,v)\in L^*$, which implies that
$$
\sum_{j=1}^{i}y_i\geq 0, \text{ }\forall i\in\{1,2,\ldots,p-1\}.
$$
Thus, $(y,v)\in M$, so $L^*\subseteq M$. Altogether, we have $L^* = M$. \hfill $\Box$
\end{proof}
\NI After finding the dual of the monotone extended second order cone, we will describe the complementarity set of this cone. In order to do so, we need to use the inequality, introduced in Lemma \ref{p3} below.

\begin{lemma}\label{p3}
For every $(x,u) \in L$ and $(y,v) \in M$, we have
\begin{equation*}
	\langle x,y \rangle\geq\|u\|\sum_{i=1}^{p}y_i\geq\|u\|\|v\|\,.
\end{equation*}
\end{lemma}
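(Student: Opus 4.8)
The plan is to recycle the Abel summation-by-parts identity already derived in the proof of Proposition~\ref{pd}: for any $x\in\mathbb{R}^p$ and $y\in\mathbb{R}^p$,
$$
\langle x,y\rangle=\sum_{i=1}^{p-1}(x_i-x_{i+1})\sum_{j=1}^{i}y_j+x_p\sum_{i=1}^{p}y_i.
$$
First I would note that $(x,u)\in L$ forces $x_i-x_{i+1}\ge 0$ for every $i\in\{1,\dots,p-1\}$, while $(y,v)\in M$ forces $\sum_{j=1}^{i}y_j\ge 0$ for the same range of $i$. Hence every summand in the sum over $i$ is a product of two nonnegative numbers, so it may be discarded, giving $\langle x,y\rangle\ge x_p\sum_{i=1}^{p}y_i$.

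Next I would derive the first inequality of the lemma. From $(y,v)\in M$ we get $\sum_{i=1}^{p}y_i\ge\|v\|\ge 0$, so this scalar is nonnegative; from $(x,u)\in L$ we get $x_p\ge\|u\|$. Multiplying the inequality $x_p\ge\|u\|$ by the nonnegative number $\sum_{i=1}^{p}y_i$ preserves its direction, whence $x_p\sum_{i=1}^{p}y_i\ge\|u\|\sum_{i=1}^{p}y_i$; chaining with the previous step yields $\langle x,y\rangle\ge\|u\|\sum_{i=1}^{p}y_i$. For the second inequality I would simply multiply $\sum_{i=1}^{p}y_i\ge\|v\|$ by the nonnegative number $\|u\|$, obtaining $\|u\|\sum_{i=1}^{p}y_i\ge\|u\|\|v\|$, and combine the two to finish.

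There is essentially no obstacle here: the summation-by-parts identity is already available, and everything else is monotonicity of multiplication by nonnegative scalars. The only point needing a moment's care is checking that $\sum_{i=1}^{p}y_i$ and $\|u\|$ really are $\ge 0$ before multiplying inequalities through by them — both are immediate, the former from the defining inequality $\sum_{i=1}^{p}y_i\ge\|v\|$ of $M$ together with $\|v\|\ge 0$, the latter from $\|\cdot\|\ge 0$. An alternative route would be to write $\langle x,y\rangle=\langle x-\|u\|e,\,y\rangle+\|u\|\langle e,y\rangle$ with $e=(1,\dots,1)\in\mathbb{R}^p$ and observe that $x-\|u\|e\in\R_{\geq+}^p$ while $y\in(\R_{\geq+}^p)^*$ (Example~2), so that $\langle x-\|u\|e,y\rangle\ge 0$; but the direct computation above is shorter and self-contained.
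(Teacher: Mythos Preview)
Your proof is correct. The approach differs from the paper's: you invoke Abel's summation identity (already established in the proof of Proposition~\ref{pd}) and drop the manifestly nonnegative sum $\sum_{i=1}^{p-1}(x_i-x_{i+1})\sum_{j\le i}y_j$ in one stroke, whereas the paper instead runs a backward induction, starting from $(x_p-\|u\|)\sum_{i=1}^p y_i\ge 0$ and successively replacing the coefficient on each partial sum $\sum_{i\le k}y_i$ by the larger $x_k-\|u\|$ until it reaches $\sum_{i=1}^p(x_i-\|u\|)y_i\ge 0$. The two arguments are equivalent in content---the paper's induction is effectively an unrolled summation by parts---but yours is shorter and more transparent because it recycles an identity already on record. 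Your alternative route via $x-\|u\|e\in\R_{\ge+}^p$ and $y\in(\R_{\ge+}^p)^*$ is also valid and gives yet another phrasing of the same nonnegativity.
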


\begin{proof}\, First, we prove that $\langle x,y\rangle\geq\|u\|\sum_{i=1}^{p}y_i$.
Since $(x,u)\in L, (y,v)\in M$, it follows that
$
x_1\geq x_2\geq\cdots\geq x_p\geq\|u\|,
$
$
\sum_{i=1}^{j}y_j\geq 0$ for all $j\in\{1,\ldots,p-1\}$ and  $\sum_{i=1}^{p}y_i\geq\|v\|\geq 0.
$
Thus, by using the backward induction,
\begin{equation*}
\begin{aligned}
\sum_{i=1}^{p}y_i &=y_1+y_2+\ldots+y_p \geq 0\\
\implies &(x_p-\|u\|)\sum_{i=1}^{p-1}y_i + (x_p-\|u\|)y_p\geq 0\\
\implies &(x_{p-1}-\|u\|)\sum_{i=1}^{p-2}y_i + (x_{p-1}-\|u\|)y_{p-1}+(x_p-\|u\|)y_p\geq 0\\
\implies& (x_{p-2}-\|u\|)\sum_{i=1}^{p-3}y_i +(x_{p-2}-\|u\|)y_{p-2}\\& + (x_{p-1}-\|u\|)y_{p-1}+(x_p-\|u\|)y_p\geq 0\\
\cdots&\\
\implies& (x_1-\|u\|)y_1+(x_2-\|u\|)y_2+\cdots+(x_p-\|u\|)y_p\geq 0\\
\iff& \langle x,y\rangle\geq\|u\|\sum_{i=1}^{p}y_i\,.
\end{aligned}
\end{equation*}
Finally, since $\langle x,y\rangle\geq\|u\|\sum_{i=1}^{p}y_i$ and $\sum_{i=1}^{p}y_i\geq\|v\|$, we have
$$
\langle x,y \rangle\geq\|u\|\sum_{i=1}^{p}y_i\geq\|u\|\|v\|\,.
$$
\end{proof}
\vskip -1.1 cm \hfill $\Box$

\NI By using Lemma \ref{p3}, we find the complementarity set of the monotone extended second order cone.

\begin{proposition}\label{cl}
Let $(x,y,u,v)\in C(L)$\footnote{By a slight abuse of the notation, we write $(x,u,y,v)$ instead of $((x,u),(y,v))$.}.
If $u\neq 0, v\neq 0$, then
\begin{gather*}
C(L)=\Bigg\{(x,u,y,v): (x,u)\in L,\textrm{ }(y,v)\in M,\textrm{ }\\\langle x,y\rangle=\|u\|\sum_{i=1}^{p}y_i,\textrm{ }~
 \sum_{i=1}^{p}y_i=\|v\|,\textrm{\rm and }\exists\lambda> 0 ~\textrm{\rm such that } v=-\lambda u\Bigg \}\\=\Bigg\{(x,u,y,v): (x,u)\in L, \textrm{ }(y,v)\in M,
\textrm{ }(x_i-x_{i+1})\sum_{j=1}^{i}y_j=0,\textrm{ }\\\forall i=1,\ldots,p-1, \textrm{ } x_p=\|u\|,\textrm{ }\sum_{i=1}^{p}y_i=\|v\|, \textrm{\rm and }\exists\lambda>0 ~\textrm{\rm such that }  v=-\lambda u\Bigg\}.
\end{gather*}
\end{proposition}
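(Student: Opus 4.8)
The plan is to reuse the Abel summation identity already isolated in the proof of Proposition~\ref{pd}: for every $(x,u)\in L$ and $(y,v)\in M$,
$$\lng (x,u),(y,v)\rng=\sum_{i=1}^{p-1}(x_i-x_{i+1})\sum_{j=1}^{i}y_j+x_p\sum_{i=1}^{p}y_i+\lng u,v\rng ,$$
and then to read off, under the complementarity hypothesis $\lng(x,u),(y,v)\rng=0$, which terms must individually vanish. First I would split the right-hand side into two nonnegative blocks. The block $\sum_{i=1}^{p-1}(x_i-x_{i+1})\sum_{j=1}^{i}y_j$ is nonnegative since $(x,u)\in L$ gives $x_i\ge x_{i+1}$ and $(y,v)\in M$ gives $\sum_{j=1}^{i}y_j\ge 0$; moreover each of its summands is itself nonnegative. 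The block $x_p\sum_{i=1}^{p}y_i+\lng u,v\rng$ is nonnegative because $x_p\ge\|u\|\ge 0$, $\sum_{i=1}^{p}y_i\ge\|v\|\ge 0$ and $\lng u,v\rng\ge-\|u\|\|v\|$ by Cauchy--Schwarz, so it is at least $\|u\|\|v\|-\|u\|\|v\|=0$. Since the two blocks are nonnegative and sum to zero, each vanishes, and since the summands of the first block are nonnegative, $(x_i-x_{i+1})\sum_{j=1}^{i}y_j=0$ for every $i\in\{1,\dots,p-1\}$.

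Next I would mine the second block. Its vanishing together with the two estimates used above forces the chain $\|u\|\|v\|\le x_p\sum_{i=1}^{p}y_i=-\lng u,v\rng\le\|u\|\|v\|$ to collapse, whence $x_p\sum_{i=1}^{p}y_i=\|u\|\|v\|$ and $\lng u,v\rng=-\|u\|\|v\|$. Here I invoke the hypothesis $u\neq 0$, $v\neq 0$: the equality case of Cauchy--Schwarz then gives $v=-\lambda u$ for some $\lambda>0$, and since $\|u\|>0$ and $\sum_{i=1}^{p}y_i\ge\|v\|=\lambda\|u\|>0$, the product equality $x_p\sum_{i=1}^{p}y_i=\|u\|\|v\|$ combined with $x_p\ge\|u\|$, $\sum_{i=1}^{p}y_i\ge\|v\|$ can hold only if $x_p=\|u\|$ and $\sum_{i=1}^{p}y_i=\|v\|$, a strict increase of either factor strictly increasing the product. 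This yields the second description. For the first description I would note that, by Lemma~\ref{p3}, $\lng x,y\rng\ge\|u\|\sum_{i=1}^{p}y_i\ge\|u\|\|v\|$, while complementarity gives $\lng x,y\rng=-\lng u,v\rng=\|u\|\|v\|$; hence both inequalities of Lemma~\ref{p3} are tight, i.e. $\lng x,y\rng=\|u\|\sum_{i=1}^{p}y_i$ and $\sum_{i=1}^{p}y_i=\|v\|$ (using $\|u\|>0$).

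For the converse inclusion I would substitute each list of conditions back into the Abel identity: with $(x_i-x_{i+1})\sum_{j=1}^{i}y_j=0$ the first block vanishes, and with $x_p=\|u\|$, $\sum_{i=1}^{p}y_i=\|v\|$, $v=-\lambda u$ one gets $x_p\sum_{i=1}^{p}y_i+\lng u,v\rng=\|u\|\|v\|-\lambda\|u\|^2=\|u\|\|v\|-\|u\|\|v\|=0$, so $(x,u,y,v)\in C(L)$. Finally, the equivalence of the two descriptions follows from the rewriting $\lng x,y\rng-\|u\|\sum_{i=1}^{p}y_i=\sum_{i=1}^{p-1}(x_i-x_{i+1})\sum_{j=1}^{i}y_j+(x_p-\|u\|)\sum_{i=1}^{p}y_i$, a sum of nonnegative terms that vanishes exactly when each $(x_i-x_{i+1})\sum_{j=1}^{i}y_j=0$ and, since $\sum_{i=1}^{p}y_i=\|v\|>0$, when $x_p=\|u\|$.

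I expect the only delicate point to be the squeeze argument in the second paragraph: one must use $u\neq 0$ and $v\neq 0$ to guarantee $\|u\|>0$ and $\sum_{i=1}^{p}y_i\ge\|v\|>0$, without which $x_p$ and $\sum_{i=1}^{p}y_i$ would not be pinned down — which is precisely why the statement is restricted to this case. Everything else is a direct unwinding of the Abel identity together with the Cauchy--Schwarz equality condition.
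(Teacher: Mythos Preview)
Your proof is correct and follows essentially the same route as the paper's: both arguments rest on Abel's summation identity, the chain $\lng x,y\rng\ge\|u\|\sum y_i\ge\|u\|\|v\|$ from Lemma~\ref{p3}, and the Cauchy--Schwarz equality case, with the hypothesis $u,v\neq 0$ used exactly where you flag it. The only organisational difference is that you obtain the second (refined) description first via a direct block-splitting of the Abel identity, whereas the paper first establishes the set $S$ (your first description) using Lemma~\ref{p3} and then refines it to the second description via the LHS/RHS sign argument---but the substance is the same.
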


\begin{proof}\, Let
\begin{gather*}
S:=\Bigg\{(x,u,y,v): (x,u)\in L,\textrm{ }(y,v)\in M,\textrm{ }\\\langle x,y\rangle=\|u\|\sum_{i=1}^{p}y_i,\textrm{ }~
 \sum_{i=1}^{p}y_i=\|v\|,\textrm{\rm and }\exists\lambda> 0 ~\textrm{\rm such that } v=-\lambda u\Bigg \}
\end{gather*}

\NI Now, our task is to show that $C(L)=S$. First, we need to prove that $C(L)\subseteq S$.
For arbitrary $(x,u,y,v)\in C(L)$, by using Lemma \ref{p3}, we have
\begin{equation*}
\begin{aligned}
0 = \langle(x,u),(y,v)\rangle & = \langle x,y\rangle+\langle u,v\rangle\\
                              & \geq \|u\|\sum_{i=1}^{p}y_i + \langle u,v\rangle\\
                              & \geq \|u\|\|v\| + \langle u,v\rangle \geq 0\,.
\end{aligned}
\end{equation*}
Hence, all the inequalities above must be equalities, that is,
\begin{equation*}
\begin{aligned}
0=\langle x,y\rangle+\langle u,v\rangle & = \|u\|\sum_{i=1}^{p}y_i + \langle u,v\rangle \\
& = \|u\|\|v\| + \langle u,v\rangle = 0\,.
\end{aligned}
\end{equation*}
Thus,
\begin{equation}
\langle x,y\rangle=\|u\|\sum_{i=1}^{p}y_i=\|u\|\|v\|\label{e1}\,.
\end{equation}
Therefore,
\begin{equation*}
\|u\|\sum_{i=1}^{p}y_i=\|u\|\|v\|
\end{equation*}
and
\begin{equation} \label{e3}
\|u\|\|v\|+\langle u,v\rangle=0\,.
\end{equation}

\NI From \eqref{e1} we get $\langle x,y\rangle=\|u\|\sum_{i=1}^{p}y_i$ and, subsequently, $\sum_{i=1}^{p}y_i=\|v\|$. From the equality case in the Cauchy-Schwarz inequality, equation (\ref{e3}) implies that $\exists\lambda>0, v=-\lambda u$. Thus, $C(L)\subseteq S$.

\NI Now, for the converse inclusion $S\subseteq C(L)$. We have: $\forall (x,u,y,v)\in S$, $\exists\lambda>0$ such that $v = -\lambda u $, $(x,u)\in L, (y,v)\in M, x^Ty=\|u\|\sum_{i=1}^{p}y_i,\forall i=1,\ldots,p$, and $\sum_{i=1}^{p}y_i=\|v\|$. Thus
$$\langle(x,u),(y,v)\rangle = \langle x,y\rangle +\langle u,v\rangle = \|u\|\|v\| + \langle u,v\rangle = 0\,.$$
Therefore, $(x,u,y,v)\in C(L)$. Hence, $S\subseteq C(L)$.

\NI Finally, we have
\begin{gather}\label{cl1}
C(L)=S=\Bigg\{(x,u,y,v): (x,u)\in L,\textrm{ }(y,v)\in M,\textrm{ }\notag\\\langle x,y\rangle=\|u\|\sum_{i=1}^{p}y_i,\textrm{ }~
 \sum_{i=1}^{p}y_i=\|v\|,\textrm{\rm and }\exists\lambda> 0 ~\textrm{\rm such that } v=-\lambda u\Bigg \}
\end{gather}
\vskip -.3cm
\NI Moreover,
$$
\begin{aligned}
\|u\|\sum_{i=1}^{p}y_i & = \langle x,y\rangle\\
& = y_1(x_1-x_2)+(y_1+y_2)(x_2-x_3)+\cdots\\&+(y_1+y_2+\cdots+y_{p-1})(x_{p-1}-x_p)+(y_1+y_2+\cdots+y_p)x_p \\
\end{aligned}
$$
if and only if

\begin{eqnarray*}
(\|u\|-x_p)\sum_{i=1}^{p}y_i = y_1(x_1-x_2)+(y_1+y_2)(x_2-x_3)\\+\cdots+(y_1+y_2+\cdots+y_{p-1})(x_{p-1}-x_p)\,.
\end{eqnarray*}

In the equation above, it is obvious that the LHS (left-hand side) is nonpositive and the
RHS (right-hand side) is nonnegative, thus both must be equal to 0. Since the components of the sum in the RHS are all nonnegative, each component must be equal to $0$. Hence, from equation \eqref{cl1} it follows that

\begin{gather*}
C(L)=\Bigg\{(x,u,y,v): (x,u)\in L, \textrm{ }(y,v)\in M,
\textrm{ }(x_i-x_{i+1})\sum_{j=1}^{i}y_j=0,\textrm{ }\\\forall i=1,\ldots,p-1, \textrm{ } x_p=\|u\|,\textrm{ }\sum_{i=1}^{p}y_i=\|v\|, \textrm{\rm and }\exists\lambda>0 ~\textrm{\rm such that }  v=-\lambda u\Bigg\}.
\end{gather*}
Now the proof is complete.
\end{proof}
\vskip -1.1cm \hfill $\Box$

\begin{lemma}\label{lyap_rank_lem}
	Let $A\in\R^{p\times p}$. Then, $A\in\LL(\R^p_{\ge+})$ if and only if it is of the form
	\begin{equation}\label{llmn}
		A=\left[
		\begin{array}{cccccc}
			a-\sum_{i=2}^pa_i & a_{2} & a_{3} & \cdots & \cdots & a_{p}\\
			& a-\sum_{i=3}^pa_i & a_{3} & \cdots & \cdots & a_{p}\\
			&  & a-\sum_{i=4}^pa_i & \cdots & \cdots & a_{p}\\
			&  &  &  \ddots& \vdots & \vdots\\
			& \emph{\textbf{0}} &  &  & a-a_p & a_{p}\\
			& & & & & a
		\end{array}
		\right],
	\end{equation}
	where $a,a_2,a_3\ldots,a_p\in\mathbb R$ are arbitrary.
\end{lemma}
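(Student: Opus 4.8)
The plan is to reduce the statement to the classical description $\LL(\R^p_+)=\{\text{diagonal matrices}\}$ for the nonnegative orthant, by transporting it through a linear isomorphism that carries $\R^p_+$ onto $\R^p_{\ge+}$. The necessary ingredient is the standard fact that the Lyapunov-like property is natural under linear isomorphisms of cones.

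First I would fix the change of coordinates. Let $G\in\R^{p\times p}$ be the matrix whose $i$-th column is $g^i:=e^1+\cdots+e^i$, i.e. $G_{ki}=1$ for $k\le i$ and $G_{ki}=0$ otherwise; then $G$ is upper triangular and $G^{-1}$ is the upper bidiagonal matrix with $1$ on the main diagonal and $-1$ on the first superdiagonal. A direct check gives $G(\R^p_+)=\R^p_{\ge+}$: writing $x=G\xi$ one has $x_k=\sum_{i\ge k}\xi_i$, so $\xi\ge 0$ is equivalent to $x_1\ge x_2\ge\cdots\ge x_p\ge 0$ (with inverse $\xi_p=x_p$, $\xi_k=x_k-x_{k+1}$). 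In particular the $g^i$ are the generators of the simplicial cone $\R^p_{\ge+}$ of Example \ref{example2}.

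Next I would invoke the transformation rule: if $\Phi$ is invertible and $K'=\Phi(K)$, then $C(K')=\{(\Phi x,\Phi^{-\top}y):(x,y)\in C(K)\}$, because $\langle\Phi x,\Phi^{-\top}y\rangle=\langle x,y\rangle$, $\Phi(K)=K'$ and $\Phi^{-\top}(K^*)=(K')^*$; consequently $A\in\LL(K')$ if and only if $\Phi^{-1}A\Phi\in\LL(K)$ (same identity: $\langle A\Phi x,\Phi^{-\top}y\rangle=\langle\Phi^{-1}A\Phi x,y\rangle$). Applying this with $K=\R^p_+$, $K'=\R^p_{\ge+}$, $\Phi=G$, and using $\LL(\R^p_+)=\{\text{diagonal matrices}\}$ — which follows from $(e^i,e^j)\in C(\R^p_+)$ for $i\neq j$, forcing $A_{ji}=\langle Ae^i,e^j\rangle=0$, in one direction, and from $\langle Dx,y\rangle=\sum_i d_ix_iy_i=0$ whenever $x\perp y$, $x,y\ge 0$, in the other — I obtain: $A\in\LL(\R^p_{\ge+})$ if and only if $A=GDG^{-1}$ for some diagonal $D=\mathrm{diag}(d_1,\ldots,d_p)$.

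Finally I would multiply out $GDG^{-1}$ and match it to \eqref{llmn}. From $(G^{-1})_{i\ell}=[i=\ell]-[i=\ell-1]$ one gets $(GDG^{-1})_{k\ell}=[k\le\ell]\,d_\ell-[k\le\ell-1]\,d_{\ell-1}$; thus $GDG^{-1}$ is upper triangular with diagonal entries $d_1,\ldots,d_p$, entry $d_\ell-d_{\ell-1}$ in every position $(k,\ell)$ with $k<\ell$, and zeros strictly below the diagonal. Setting $a:=d_p$ and $a_\ell:=d_\ell-d_{\ell-1}$ for $\ell=2,\ldots,p$ is a bijective reparametrization, under which $d_k=a-\sum_{i=k+1}^p a_i$, and $GDG^{-1}$ becomes exactly the matrix in \eqref{llmn}; conversely every choice of $a,a_2,\ldots,a_p$ arises this way. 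I expect the only genuinely delicate point to be bookkeeping — keeping $G$ versus $G^{-1}$ and the direction of the conjugation $\Phi^{-1}A\Phi$ straight, and then performing the triangular product without index slips; the rest is routine. A more computational alternative, bypassing the change of basis, would substitute the generating pairs of $C(\R^p_{\ge+})$ from Example \ref{example2} directly into \eqref{lyap-like} and solve the resulting linear system for the entries of $A$; this also works but involves a heavier case analysis.
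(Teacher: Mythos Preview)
Your proof is correct and takes a genuinely different route from the paper. The paper proceeds directly: it plugs the dual pairs of generators $u^i=\sum_{k\le i}e^k$, $v^j=e^j-e^{j+1}$ of the simplicial cone $\R^p_{\ge+}$ into the Lyapunov-like condition and extracts the linear relations \eqref{ij}--\eqref{ij4} on the entries of $A$, then verifies the converse by writing an arbitrary $(x,y)\in C(\R^p_{\ge+})$ as in \eqref{xy}. You instead transport the problem through the isomorphism $G$ (whose columns are precisely the paper's $u^i$, and for which $G^{-\top}$ has columns $v^j$) and reduce to the classical $\LL(\R^p_+)=\{\text{diagonals}\}$, then read off \eqref{llmn} from the explicit form of $GDG^{-1}$. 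Your approach is cleaner on the bookkeeping side, makes $\beta(\R^p_{\ge+})=p$ transparent, and generalises immediately to any simplicial cone; the paper's approach is entirely self-contained within the framework of Example~\ref{example2} and does not need the conjugation-invariance lemma for $\LL$. Amusingly, the ``more computational alternative'' you mention at the end is exactly what the paper does.
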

\begin{proof}
	Let $e^i\in\R^p$, $1\le i\le p$ be the canonical unit vectors in $\R^p$ and
	$e^{p+1}$ be the zero vector in $\R^p$. Denote $u^i:=\sum_{k=1}^i e^k\in\mn$\, and\,
	$v^i:=e^i-e^{i+1}\in(\mn)^*$, for\, $1\le i\le p$ (see Example \ref{example2}). Then,
	$\lng u^i,v^j\rng=\delta_{ij}$, where
	$\delta_{ij}$ is the Kronecker symbol, that is, $\delta_{ii}=1$ and $\delta_{ij}=0$, for
	$i\ne j$. If follows that $(u^i,v^j)\in C(\mn)$, whenever $i\ne j$ (as it can be seen
	from Example \ref{example2}, too). Hence, if
	$A\in\LL(\mn)$ and $i\ne j$, then
	\begin{equation}\label{ij}
		\lng Au^i,v^j\rng=\sum_{k=1}^i (a_{jk}-a_{j+1,k})=0,
	\end{equation}
	where we set $a_{p+1,k}:=0$. By using equation \eqref{ij}, we get  	
	\begin{equation}\label{ij2}
		\sum_{\ell=j}^p\lng Au^i,v^{\ell}\rng=\sum_{k=1}^i a_{jk}=0,\quad \mbox{if}\quad
		j>i.
	\end{equation}
	By equation \eqref{ij2} we get
	\begin{equation}\label{ij3}
		a_{ji}=\sum_{k=1}^i a_{jk}-\sum_{k=1}^{i-1} a_{jk}=0, \quad \mbox{if}\quad
		j>i.
	\end{equation}
	By using again equation \eqref{ij}, we get
	\begin{equation}\label{ij4}
		a_{ji}-a_{j+1,i}=\sum_{k=1}^i (a_{jk}-a_{j+1,k})
		-\sum_{k=1}^{i-1} (a_{jk}-a_{j+1,k})=0,\quad \mbox{if}\quad j+1<i.
	\end{equation}
	Equations \eqref{ij}, \eqref{ij3} and \eqref{ij4} imply that $A$ is of the form \eqref{llmn}. Now,
	suppose that $A$ is of the form \eqref{llmn}. From Example \ref{example2}, any element
	$(x,y)\in C(\mn)$ can be written in the form
	\begin{equation}\label{xy}
		(x,y)=\lf(\sum_{i\in I}\alpha_i u^i,\sum_{j\in J}\beta_j v^j\rg);\quad
		\alpha_i,\beta_j\ge 0,
	\end{equation}
	for some $I,J\subseteq\{1,2,\dots,n\}$ with $I\cup J=\{1,2,\dots,n\}$ and
	$I\cap J=\varnothing$, because $\{u^i:1\leq i\leq j\}\subseteq\mn$ and
	$\{v^i:1\leq i\leq j\}\subseteq(\mn)^*$ are generators of the simplicial cones
	$\mn$ and $(\mn)^*$, respectively, and $x\perp y$. As
	\(\lng Au^i,v^j\rng=0,\) by considering the derivation of equations \eqref{ij},
	\eqref{ij2}, \eqref{ij3} and \eqref{ij4} above in the reverse order, equation \eqref{xy}
	implies that $\lng Ax,y\rng=0$. Hence, $A\in\LL(\mn)$.
\end{proof}
\begin{theorem}
For the monotone extended second
order cone \eqref{defmesoc}, any Lyapunov like transformation $T$ is of the form
{\small
\begin{equation}\label{ee4}
T=\left[
\begin{array}{cccccc|ccc}
a-\sum_{j=2}^pa_{j} & a_{2} & a_{3} & \cdots & \cdots & a_{p} & c_1 & \cdots & c_q\\
 & a-\sum_{j=3}^pa_{j} & a_{3} & \cdots & \cdots & a_{p} & c_1 & \cdots & c_q \\
 &  & a-\sum_{j=4}^pa_{j} & \cdots & \cdots & a_{p} & c_1 & \cdots & c_q \\
 &  &  &  \ddots& \vdots & \vdots & \vdots & & \vdots\\
 & \emph{\textbf{0}} &  &  & a-a_{p} & a_{p} & c_1 & \cdots & c_q \\
 & & & & & a & c_1 & \cdots & c_q
\\ \hline
 &  &  & &  & c_1 & a & & *\\
 & \emph{\textbf{0}} &  & &  & \vdots & & \ddots & \\
 &  &  & &  & c_q & -* & & a\\
\end{array}
\right],
\end{equation}
}
where $a,a_2,a_3,\ldots,a_p,c_1,...,c_q\in\R$ are arbitrary.
Hence, its
Lyapunov rank is given by
\begin{equation*}
\beta(L)=p+\frac{q(q+1)}2\,.
\end{equation*}
\end{theorem}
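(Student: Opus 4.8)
The plan is to show that $\LL(L)$ coincides with the linear space $\mathcal T$ of all matrices of the form \eqref{ee4}; the stated value of $\beta(L)$ is then obtained by counting the independent entries of \eqref{ee4}, namely $a$, the scalars $a_2,\dots,a_p$, the scalars $c_1,\dots,c_q$, and the $\binom{q}{2}$ strictly upper triangular entries of the skew-symmetric part of the lower-right block, so that $\beta(L)=1+(p-1)+q+\tfrac{q(q-1)}{2}=p+\tfrac{q(q+1)}{2}$ (for $q=0$ this recovers Lemma \ref{lyap_rank_lem}). Throughout, write $T=\bs A&B\\ C&D\es$ in $(p,q)$ block form with $A\in\R^{p\times p}$, $B\in\R^{p\times q}$, $C\in\R^{q\times p}$, $D\in\R^{q\times q}$; let $e=(1,\dots,1)\tp\in\R^p$ and let $e^k\in\R^p$ denote the $k$-th canonical unit vector.

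For the inclusion $\LL(L)\subseteq\mathcal T$ I would evaluate $\langle Tw,z\rangle=0$ on three families of pairs, each of which lies in $C(L)$ by a direct check from \eqref{defmesoc}--\eqref{dualmesoc}: (I) $((x,0),(y,0))$ with $(x,y)\in C(\R^p_{\ge+})$; (II) $((x,0),(e^p,v))$ with $x\in\R^p_{\ge+}$, $x_p=0$, $\|v\|\le 1$; (III) $((e,u),(e^k,-u))$ with $\|u\|=1$ and $1\le k\le p$. From (I) we get $\langle Ax,y\rangle=\langle T(x,0),(y,0)\rangle=0$ for all $(x,y)\in C(\R^p_{\ge+})$, hence $A\in\LL(\R^p_{\ge+})$ and, by Lemma \ref{lyap_rank_lem}, $A$ has the form \eqref{llmn}; in particular every row of $A$ sums to $a$, so $Ae=ae$. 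From (II), noting that $x_p=0$ gives $(x,e^p)\in C(\R^p_{\ge+})$ and hence $\langle Ax,e^p\rangle=0$, we obtain $0=\langle T(x,0),(e^p,v)\rangle=\langle Cx,v\rangle$ for all $\|v\|\le 1$, so $Cx=0$; letting $x$ range over the solid cone $\{x\in\R^p_{\ge+}:x_p=0\}$, which spans $\{x\in\R^p:x_p=0\}$, we conclude that the first $p-1$ columns of $C$ vanish, i.e. $Cx=x_p\,d$ for some $d=(d_1,\dots,d_q)\tp$ (so $Ce=d$). From (III), expanding $0=\langle T(e,u),(e^k,-u)\rangle$ and using $Ae=ae$ gives
\[
a+(Bu)_k-\langle Ce,u\rangle-u\tp Du=0 ,
\]
and since $k$ appears only in $(Bu)_k$, the linear functional $u\mapsto(Bu)_k-(Bu)_1$ vanishes on the unit sphere, hence identically, so all rows of $B$ equal a common vector $c=(c_1,\dots,c_q)$. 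Finally, taking $k=1$ above and adding the identities at $u$ and $-u$ gives $u\tp Du=a$ for every unit $u$, i.e. $D+D\tp=2aI_q$; subtracting them gives $\langle c-d,u\rangle=0$ for all $u$, i.e. $d=c$. Thus $A$ has the form \eqref{llmn}, $B$ has all rows $(c_1,\dots,c_q)$, $C$ has last column $(c_1,\dots,c_q)\tp$ and zero columns otherwise, and $D=aI_q+K$ with $K\tp=-K$; this is precisely the shape \eqref{ee4}.

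For the reverse inclusion $\mathcal T\subseteq\LL(L)$, let $T$ have the form \eqref{ee4} and let $((x,u),(y,v))\in C(L)$. Arguing as in the proofs of Lemma \ref{p3} and Proposition \ref{cl}, one has $x\in\R^p_{\ge+}$, $y\in(\R^p_{\ge+})^*$, $(x_i-x_{i+1})\sum_{j=1}^i y_j=0$ for $1\le i\le p-1$, $(\|u\|-x_p)\sum_{i=1}^p y_i=0$, $\langle x,y\rangle=\|u\|\sum_{i=1}^p y_i=\|u\|\|v\|$, and $\langle u,v\rangle=-\|u\|\|v\|$. Expanding $\langle Ax,y\rangle$ via Abel's summation and cancelling each $a_j$-term against a factor $(x_i-x_{i+1})\sum_{j\le i}y_j=0$ collapses it to $a\langle x,y\rangle=a\|u\|\|v\|$. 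Writing $c=(c_1,\dots,c_q)\tp$ and $D=aI_q+K$ with $K\tp=-K$, we have $Bu=\langle c,u\rangle e$, $Cx=x_p c$ and $Du=au+Ku$, whence
\[
\langle T(x,u),(y,v)\rangle=a\|u\|\|v\|+\langle c,u\rangle\sum_{i=1}^p y_i+x_p\langle c,v\rangle+a\langle u,v\rangle+\langle Ku,v\rangle .
\]
This vanishes by a short case split: if $u=0$ then $\langle x,y\rangle=0$, $x_p\sum_i y_i=0$ and $\sum_i y_i\ge\|v\|$ together give $x_p\langle c,v\rangle=0$ and all terms are $0$; if $u\neq 0$ then $\sum_i y_i=\|v\|$, and either $v=0$ (all terms vanish) or $v=-\lambda u$ with $\lambda>0$, in which case $\|v\|=\lambda\|u\|>0$ forces $x_p=\|u\|$, while $\langle Ku,v\rangle=-\lambda\langle Ku,u\rangle=0$, and the five terms cancel in pairs. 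Hence $T\in\LL(L)$, so $\LL(L)=\mathcal T$ and the formula for $\beta(L)$ follows.

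The parts that will require the most care are the choice and verification of families (II) and (III) --- one must check they genuinely lie in $C(L)$ and that together they kill every entry of $B$, of $C$, and of the symmetric part of $D$ --- and the case analysis in the converse, which rests on the identity $\langle Ax,y\rangle=a\|u\|\|v\|$ on $C(L)$. Note that $\langle Ax,y\rangle$ need not vanish on $C(L)$ once $u\neq 0$, so there is no shortcut via Lemma \ref{lyap_rank_lem} alone: the genuinely new point is the way the $a$-term of $A$, the off-diagonal rows of $B$, the last column of $C$, and the diagonal of $D$ are tied together through $\|u\|$, $\|v\|$ and the condition $v=-\lambda u$.
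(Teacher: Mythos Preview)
Your proof is correct and follows the same overall architecture as the paper's: test the Lyapunov-like condition on specific pairs in $C(L)$ to pin down the blocks $A,B,C,D$, then verify directly that every matrix of the form \eqref{ee4} annihilates $C(L)$. The execution differs in a few places, and yours is the more economical one. The paper first applies the reflection $(u,v)\mapsto(-u,-v)$ to split the condition into the two equations \eqref{ee}, then treats the four sets $C_1(L),\dots,C_4(L)$ separately, and in determining $B$ even branches on whether $C=0$ or $C\neq 0$. You bypass the reflection trick entirely and let the single family (III), with $k$ varying, do triple duty: equality of the rows of $B$, the relation $D+D\tp=2aI_q$, and the identification of the last column of $C$ with the common row of $B$ all fall out of the same identity $a+(Bu)_k-\langle d,u\rangle-u\tp Du=0$. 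For the converse, the paper carries out a four-case verification on $C_1(L),\dots,C_4(L)$; your key observation that the identities $(x_i-x_{i+1})\sum_{j\le i}y_j=0$ and $(\|u\|-x_p)\sum_i y_i=0$ hold on \emph{all} of $C(L)$ (not just on $C_3(L)$, as Proposition~\ref{cl} might suggest) lets you reduce $\langle Ax,y\rangle$ to $a\|u\|\|v\|$ uniformly and then finish with a short three-way split. Both arguments rely on Lemma~\ref{lyap_rank_lem} in the same way; what you gain is a shorter path with no auxiliary case distinctions, at the cost of the families (II) and (III) needing to be checked carefully to lie in $C(L)$, which you flag.
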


\begin{proof}\, Recall that the complementarity set for the monotone extended second order cone $L$ is
$$C(L)=\{((x,u),(y,v)) \in L\times M: (x,u)\perp (y,v)\}.$$
We partition the above set in the following way:
$$C(L):=C_1(L)\cup C_2(L)\cup C_3(L)\cup C_4(L),$$
where
\begin{equation*}
\begin{aligned}
	& C_1(L):=\{(x,0,y,0)\in C(L)\},\\
	& C_2(L):=\{(x,0,y,v)\in C(L):v\neq 0\},\\
	& C_3(L):=\{(x,u,y,v)\in C(L):u\neq 0\neq v\},\\
	& C_4(L):=\{(x,u,y,0)\in C(L):u\neq 0\}.
\end{aligned}
\end{equation*}

\NI Since $x=0 \Rightarrow u=0$ and $y=0 \Rightarrow v=0$, for any Lyapunov-like transformation on
$L$ we only need to consider the case of\, $x\neq 0 \neq y$.  Let $T$ be any element of $\LL(L)$, so it has the following block form:

$$
\left[
\begin{array}{cc}
   A & B  \\
   C & D
  \end{array}
  \right]
  : \R^p\times\R^q\rightarrow\R^p\times\R^q,
$$
where $A\in\R^{p\times p},B\in\R^{p\times q}, C\in\R^{q\times p}$, and $D\in\R^{q\times q}$. Take any $(x,u,y,v) \in C(L)$. Then (\ref{lyap-like}) implies
$$
\langle Ax,y\rangle + \langle Bu,y\rangle + \langle Cx,v\rangle + \langle Du,v\rangle =0,
$$
$$
\langle Ax,y\rangle - \langle Bu,y\rangle - \langle Cx,v\rangle + \langle Du,v\rangle =0,
$$
where the latter equation comes from the former one by substituting $-u$ for $u$ and $-v$ for $v$. By adding and subtracting the above equations, we get
\begin{equation}
\begin{aligned}
\langle Ax,y\rangle + \langle Du,v\rangle=0,\\
\langle Bu,y\rangle + \langle Cx,v\rangle=0.\label{ee}
\end{aligned}
\end{equation}

\NI By using an element $(x,0,y,0)\in L\times M$ in $C_1(L)$, with $x\in\R_{\geq+}^p$ and
$y\in(\R_{\geq+}^p)^*$, we get $\langle Ax,y\rangle = 0$, which implies that $A\in\LL(\R_{\geq+}^p)$.

Now, we will determine the structures of matrices $B$ and $C$. By using elements in $C_2(L)$, from the second equation in (\ref{ee}), we get
\begin{equation*}
	\langle Cx,v\rangle = \langle B0,y\rangle + \langle Cx,v\rangle = 0.
\end{equation*}
Suppose that $Ca^i\neq 0$ for some $i<p$ and let $v:=\frac{Ca^i}{\|Ca^i\|}$, and $y:=e^j,(j>i)$, thus, $\langle y, e^j\rangle = 1 = \|v\|$. Hence, $(a^i,0,e^j,v)\in C_2(L)$. Then $0=\langle Cx,v\rangle=\langle Ca^i,v\rangle =\|Ca^i\|$, which leads to a contradiction. Hence, $Ca^i = 0$.
Then, for certain $c_1,\ldots,c_q\in\R$ we have
$$
C=\left[\begin{array}{c|c}\textbf{0}&\begin{array}{ccc}c_1\\ \vdots \\c_q\end{array}\end{array}\right]_{q\times p}.
$$
If $C=0$, the second equation in (\ref{ee}) demonstrates that $\langle Bu,y \rangle =0$ for all $(x,u,y,v)\in C_3(L)$. It is easy to verify that $(e,-v,e^i,v)\in C_3(L)$, where $v$ is an arbitrary unit vector in $\R^q$. Hence, $\langle B(-v),e^i\rangle = 0$, for all $1 \leq i \leq p$, thus $Bv=0$. In consequence, $B=0$.

If $C \neq 0$, first we need to find the structure of matrix $B$. We have $\langle Bu,y\rangle=0$ for any $(x,u,y,0)\in C_4(L)$.

Let $u^i$ denote the standard (canonical) unit vector in $\R^q$ and for any $n>m$, let
$y^{m,n}:=e^m-e^n \in\R^p$. Since $\left(e,u^i,y^{m,n},0\right)\in C_4(L)$,
$$
\langle Bu^i,y^{m,n}\rangle=0.
$$
Therefore,
$$
B=\begin{bmatrix}
b_1 & b_2 & \cdots &b_q \\
\vdots & \vdots &  & \vdots\\
 b_1 & b_2 & \cdots &b_q
\end{bmatrix}_{p\times q}\,.
$$
For $i=1,\ldots,q$ and $j=1,\ldots,p$, we have $(e,u^i,e^j,-u^i) \in C_3(L)$ and subsequently,
$$\langle Bu^i,e^j\rangle+\langle Ce,-u^i\rangle=0.$$
It readily implies $b_i=c_i$. Hence,
$$
B=\begin{bmatrix}
c_1 & c_2 & \cdots &c_q \\
\vdots & \vdots &  & \vdots\\
 c_1 & c_2 & \cdots &c_q
\end{bmatrix}_{p\times q}\,.
$$

As $(e,u,\frac{1}{p}e, -u) \in C_3(L)$ for all $u$ with $\|u\|=1$, by using (\ref{ee}), we have
\begin{equation} \label{ee2}
\left\langle Ae,\frac{1}{p}e\right\rangle+\langle Du,-u\rangle =0.
\end{equation}

Let $\displaystyle a:=\frac{\langle Ae,e\rangle}{p}$. Then (\ref{ee2}) implies
$$\left\langle\left(\frac{D+D^T}{2}-aI\right)u,u\right\rangle=0,
$$
and hence
\begin{equation}
D+D^T=2aI.\label{ee3}
\end{equation}

\NI Obviously, $(e,-u^1,e^1,u^1) \in C(L)$ and using the first equation in (\ref{ee}) gives
$$
\langle Ae,e^1\rangle -\langle Du^1,u^1\rangle=0,
$$
which implies that $d_{11}=\sum_{j}a_{1j}$. Thus, (\ref{ee3}) implies that $d_{11}=a$ and hence, $\sum_{j=1}^pa_{1j}=a$.

\NI By changing $e^1$ to $e^2$ (yes, we can), we have $\sum_{j=2}^pa_{2j}=d_{22}=a$. By following this process, we obtain that $d_{ii}=\sum_{j=1}^pa_{ij}=a$, for all $1\leq i \leq p$.

Therefore, by equation (\ref{ee3}), $A\in\LL(\mn)$ (shown above) and Lemma \ref{lyap_rank_lem},
any Lyapunov-like transformation on $L$ has the form \eqref{ee4}.

Now, we want to show that any transformation $T$, which can be represented in the form (\ref{ee4}), is Lyapunov-like on $L$, so let $T$ be given as above. Then we have
\begin{equation}\label{ellt}
\langle T(x,u),(y,v)\rangle = \langle Ax,y\rangle+\langle Du,v\rangle+\langle Bu,y\rangle + \langle Cx,v\rangle.
\end{equation}
We wish to show that for any $(x,u,y,v) \in C(L)$, the RHS in the above equation is zero.
We will perform a case-by-case analysis.

\NI {\em Case 1.}\,
For any $(x,u,y,v):=(x,0,y,0)\in C_1(L)$, the RHS of \eqref{ellt} is equal to zero, as
$(x,y)\in\C\left(\R^n_{\ge+}\right)$ and we have already shown that $A\in\LL(\R^n_{\ge+})$, hence
it is enough to use again Lemma \ref{lyap_rank_lem}.

\NI {\em Case 2.}\, For any
$(x,u,y,v):=(x,0,y,v)\in C_2(L)$, the RHS of \eqref{ellt} is $(c_1v_1+...+c_qv_q)x_p$. Suppose
that $x_p\ne 0$. Then, since $(x,y)\in C\lf(\R^n_{\ge+}\rg)$, from Example \ref{example2} we
get
$y_1+\dots+y_p=0$. Hence, $(y,v)\in M$ and \eqref{dualmesoc} implies $v=0$, which contradicts
$(x,0,y,v)\in C_2(L)$. Thus, $x_p=0$ and therefore the RHS of \eqref{ellt} is zero.

\NI {\em Case 3.}\,
Take an arbitrary
$(x,u,y,v) \in C_3(L)$. Proposition \ref{cl} indicates that for some $\lambda >0$ one has $v=-\lambda u$, thus
\begin{equation}\label{ly}
\begin{aligned}
\langle Ax,y\rangle +\langle Du,v\rangle & = \langle Ax,y\rangle +\left\langle \frac{D+D^T}{2} u,v\right\rangle=\langle Ax,y\rangle + a\langle u,v\rangle\\
&=\langle z,y\rangle+a\langle u,v\rangle\\&=\sum_{i=1}^{p-1}\left[(z_i-z_{i+1})\sum_{j=1}^iy^j\right]+z_p\sum_{i=1}^{p}y^i+a\langle u,v\rangle,
\end{aligned}
\end{equation}
where $z_i:=\sum_{j=1}^ia_jx_i+\sum_{k=i+1}^pa_kx_k$, for any $1\le i\le p-1$ and $z_p=\sum_{k=1}^pa_kx_p$. Then for any $1\le i\le p-1$, it is easy check that $z_i-z_{i+1}=\sum_{j=1}^ia_j(x_i-x_{i+1})$. By inserting these equalities and the formula for $z_p$ into equation \eqref{ly}, and by using Proposition \ref{cl}, we obtain $\langle Ax,y\rangle +\langle Du,v\rangle =0$.
We will show that
$$
\langle Bu,y\rangle + \langle Cx,v\rangle=0.
$$
By using the full power of Proposition \ref{cl}, including $v=-\lambda u$ for some $\lambda>0$, we have
$$
\langle Bu,y\rangle = \sum_{i=1}^q(c_iu_i)\cdot\sum_{i=1}^py_i=\|v\|\sum_{i=1}^q(c_iu_i)
$$
and
$$
\langle Cx,v\rangle = x_p\sum_{i=1}^q(c_iv_i)=\|u\|\sum_{i=1}^q(c_iv_i).
$$
Then
\begin{align*}
\langle Bu,y\rangle + \langle Cx,v\rangle &= \|v\|\sum_{i=1}^q(c_iu_i)+ \|u\|\sum_{i=1}^q(c_iv_i)
\\
&=\lambda\|u\|\sum_{i=1}^q(c_iu_i)-\lambda\|u\| \sum_{i=1}^q(c_iu_i)=0.
\end{align*}
\medskip

\NI {\em Case 4.}\,
For any
$(x,y,u,v):=(x,u,0,v)\in C_4(L)$, the RHS of \eqref{ellt} is $(c_1u_1+...+c_qu_q)(y_1+...+y_q)$.
Suppose that $y_1+\dots+y_p\ne 0$. Then, since $(x,y)\in C(\R^n_{\ge+})$, from Example
\ref{example2} we get $x_p=0$.  Hence, $(x,u)\in M$ and \eqref{dualmesoc} implies $u=0$, which
contradicts $(x,u,y,0)\in C_4(L)$. Thus, $y_1+\dots+y_p=0$ and therefore the RHS of
\eqref{ellt} is zero.

In conclusion, the RHS of \eqref{ellt} is zero for any $(x,u,y,v)\in C_1(L)\cup C_2(L)\cup
C_3(L)\cup C_4(L)=C(L)$
Therefore, $T\in\LL(L)$. Following the definition of the Lyapunov rank, its value for the cone $L$ equals to the number of independent parameters in (\ref{ee4}), which is\, $p+\frac{q(q+1)}{2}$. \hfill $\Box$
\end{proof}
After calculating the Lyapunov rank of MESOC, we prove our second main result, namely that this cone is reducible. Recall that a cone $K$ in $\mathbb{R}^m$ is {\em reducible} if it can be expressed as a sum $K=K_1+K_2$, where $K_1, K_2 \neq \{0\}$ are cones with ${\rm span}(K_1) \cap {\rm span}(K_2)= \{0\}$. Otherwise, it is called {\em irreducible}.

\begin{theorem}
For the monotone extended second order cone $L$ defined in (\ref{defmesoc}) one has $L=L_1+L_2$, where
\begin{align*}
L_1:={\rm cone}\,\Big\{ (\underbrace{1,\ldots,1}_p, m_1, \ldots, m_q): m_1^2+ \cdots+m_q^2 \leq 1 \Big\}
\end{align*}
and
\begin{align*}
L_2:={\rm cone}\,\Big \{(\underbrace{1,0,\ldots,0}_p,\underbrace{0, \ldots,0}_q), (\underbrace{1,1,\ldots,0}_p,\underbrace{0, \ldots,0}_q), \\\ldots, (\underbrace{1,1,\ldots,1,0}_p,\underbrace{0, \ldots,0}_q) \Big\}.
\end{align*}
As a by-product, we show that $L$ is a reducible cone.
\end{theorem}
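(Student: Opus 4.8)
The plan is to establish the set identity $L=L_1+L_2$ by two inclusions, and then read off reducibility from the definition recalled just before the statement.

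For $L_1+L_2\subseteq L$ I would use that $L$ is a closed convex cone — it is the intersection of $\R^p\times\R^q$ with the half-spaces $x_1\ge x_2,\dots,x_{p-1}\ge x_p$ and with the convex region $x_p\ge\|u\|$ — so it suffices to check that each displayed generator of $L_1$ and of $L_2$ lies in $L$, and each such check is a one-line coordinate comparison. The substance of the theorem is the reverse inclusion $L\subseteq L_1+L_2$. Given $(x,u)\in L$, I would write down the explicit splitting
\[
(x,u)=\bigl(x_p\,e,\,u\bigr)\;+\;\Bigl(\sum_{k=1}^{p-1}(x_k-x_{k+1})\,u^k,\;0\Bigr),
\]
where $e=(1,\dots,1)\in\R^p$ and $u^k=e^1+\cdots+e^k$ are the partial-sum vectors of Example~2. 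The first summand lies in $L_1$: one checks at once that $L_1=\{(te,w)\in\R^p\times\R^q:t\ge\|w\|\}$, and $x_p\ge\|u\|$ is precisely the last link of the chain defining $L$. The second summand lies in $L_2$: a telescoping computation identifies $\sum_{k=1}^{p-1}(x_k-x_{k+1})u^k$ with the vector $(x_1-x_p,\dots,x_{p-1}-x_p,0)$, whose coefficients $x_k-x_{k+1}$ are nonnegative by the monotonicity $x_1\ge\cdots\ge x_p$, so this is a conic combination of the generators of $L_2$. Summing the two summands returns $(x,u)$, which finishes this inclusion.

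For the reducibility conclusion, assume $p\ge2$ (for $p=1$ the cone $L$ is a Lorentz cone and $L_2=\{0\}$, so the statement degenerates). Then $L_1\ne\{0\}$ and $L_2\ne\{0\}$. I would next compute the spans: $\operatorname{span}(L_2)=\operatorname{span}\{u^1,\dots,u^{p-1}\}\times\{0\}=\{(x,0)\in\R^p\times\R^q:x_p=0\}$, while $\operatorname{span}(L_1)=\R\,(e,0)\oplus(\{0\}\times\R^q)$, since the generators $(e,m)$ together with their differences $(0,m)=(e,m)-(e,0)$ span exactly this subspace. Every element of $\operatorname{span}(L_1)$ has all of its first $p$ coordinates equal, so if it also lies in $\operatorname{span}(L_2)$ that common value must be $0$ and its $\R^q$-part must vanish; hence $\operatorname{span}(L_1)\cap\operatorname{span}(L_2)=\{0\}$ (the two subspaces are in fact complementary, of dimensions $q+1$ and $p-1$). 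By definition, $L=L_1+L_2$ with these two nonzero subcones exhibits $L$ as a reducible cone.

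I expect the only step that is not bookkeeping to be spotting the correct splitting in the second paragraph: one must see that subtracting the constant vector $x_p e$ off the first block turns $x$ into a nonincreasing nonnegative vector with last coordinate $0$ — that is, into a point of the simplicial cone generated by $u^1,\dots,u^{p-1}$ as in Example~2 — while the remaining constant-plus-$u$ part is exactly a point of the Lorentz-type cone $L_1$. Once this candidate is written down, all remaining verifications are inequalities already supplied by the definition of $L$.
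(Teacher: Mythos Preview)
Your proposal is correct and follows essentially the same route as the paper: the same splitting $(x,u)=(x_p e,u)+\bigl(\sum_{k=1}^{p-1}(x_k-x_{k+1})u^k,0\bigr)$ appears there (with the change of variables $a_1=x_p$, $a_{p-k+1}=x_k-x_{k+1}$), and the inclusion $L_1+L_2\subseteq L$ is likewise deduced from $L_1,L_2\subseteq L$ and convexity. Your treatment of $\operatorname{span}(L_1)\cap\operatorname{span}(L_2)=\{0\}$ and the caveat $p\ge2$ are more explicit than the paper's, which simply asserts these as obvious.
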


\begin{proof}\, First, we show the inclusion $L\subseteq L_1+L_2$.

\NI An arbitrary element $(x_1, \ldots, x_p, u_1, \ldots, u_q) \in L$, by the definition of $L$, can be represented as $(\sum_{i=1}^p a_i, \ldots, a_1+a_2, a_1, u_1, \ldots, u_q)$, where $a_i \geq 0$ for $i=2,\ldots,p$ and $a_1 \geq \|(u_1, \ldots,u_q)\|$. Hence,
\begin{align*}
& (x_1, \ldots, x_p, u_1, \ldots, u_q)\\
& = \left(\sum_{i=1}^p a_i, \sum_{i=1}^{p-1} a_i, \ldots, a_1, u_1, \ldots, u_q\right) \\
& = (a_1, \ldots, a_1,u_1, \ldots, u_q) + (\underbrace{a_2, \ldots, a_2,0}_p,\underbrace{0, \ldots, 0}_q) + \cdots \\&+ (\underbrace{a_p,0, \ldots,0}_p, \underbrace{0, \ldots, 0}_q)\\
& = (a_1, \ldots, a_1,u_1, \ldots, u_q)+ a_2(\underbrace{1, \ldots, 1,0}_p,\underbrace{0, \ldots, 0}_q)+ \cdots\\ &+
a_p(\underbrace{1,0, \ldots,0}_p, \underbrace{0, \ldots, 0}_q).
\end{align*}
Obviously, $a_2(\underbrace{1, \ldots, 1,0}_p,\underbrace{0, \ldots, 0}_q)+ \cdots + a_p(\underbrace{1,0, \ldots,0}_p, \underbrace{0, \ldots, 0}_q) \in L_2$.

\vspace{.2cm}
\NI Now, we show that $(a_1, \ldots, a_1,u_1, \ldots, u_q) \in L_1$. It is trivial when $a_1=0$, so we assume that $a_1>0$. Thus, we have
\[\displaystyle (a_1, \ldots, a_1,u_1, \ldots, u_q)=a_1\left (1, \ldots, 1,\frac{u_1}{a_1}, \ldots, \frac{u_q}{a_1}\right ).\] As $a_1 \geq \|(u_1, \ldots,u_q)\|$, we get
$$a_1\geq \sqrt{u_1^2+ \cdots + u_p^2}\, \equiv\, 1 \geq \sqrt{\left ( \frac{u_1}{a_1}\right )^2 + \cdots + \left (\frac{u_q}{a_1}\right)^2  }, $$
which, by the definition of $L_1$, gives that $(a_1, \ldots, a_1,u_1, \ldots, u_q) \in L_1$.

\NI Hence, we showed that an arbitrary element $(x_1, \ldots, x_p, u_1, \ldots, u_q) \in L$ can be represented as the sum of two elements, which are
\[(a_1, \ldots, a_1,u_1, \ldots, u_q) \in L_1~\]
and
\[a_2(\underbrace{1, \ldots, 1,0}_p,\underbrace{0, \ldots, 0}_q)+ \cdots + a_p(\underbrace{1,0, \ldots,0}_p, \underbrace{0, \ldots, 0}_q) \in L_2\,.
\]

Now, for the inclusion $L_1+L_2 \subseteq L$. Observe that $L_1 \subseteq L$ and $L_2 \subseteq L$. From the convexity of the cone $L$, it follows that $L_1+l_2 \subseteq L+L=L$.

\vspace{.1cm}
\NI

It concludes the proof of the equality $L=L_1+L_2$. Obviously, the cones $L_1, L_2 \neq \{0\}$ and ${\rm span}(L_1) \cap {\rm span}(L_2)= \{0\}$.     \hfill $\Box$
\end{proof}

For the sake of completeness we quote the following three results that will help us proving Theorem \ref{isotonicity-on-cylinder}, where a characterization of $K\subseteq\R^p\times\R^q$ as an L-isotone projection set is given.

\begin{theorem}[see \cite{nemeth-nemeth-2012}]\label{fooo} 
	The closed convex set $C\subset\R^m$ with nonempty interior is a $K$-isotone projection set
	if and only if it is of the form
	\begin{equation*}
		C=\bigcap_{i\in \N} \Hy_-(u^i,\alpha_i),
	\end{equation*}
	where each affine hyperplane $\Hy(u^i,\alpha_i)$ is tangent to $C$ and it is a $K$-isotone projection set.
\end{theorem}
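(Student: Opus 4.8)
The plan is to reduce everything to a single half-space and then bootstrap. Writing $\|u\|=1$, Moreau's Decomposition Theorem (or a direct computation) gives $P_{\Hy_-(u,a)}(x)=x-\lng u,x-a\rng_+\,u$, so for $x\leq_K y$ one has $P_{\Hy_-(u,a)}(y)-P_{\Hy_-(u,a)}(x)=(1-\theta)(y-x)+\theta\big((y-x)-\lng u,y-x\rng u\big)$ for some $\theta=\theta(x,y)\in[0,1]$. Since $K$ is a convex cone, the half-space $\Hy_-(u,a)$ --- equivalently the hyperplane $\Hy(u,a)$, whose projection produces the increment $(y-x)-\lng u,y-x\rng u$ exactly --- is a $K$-isotone projection set if and only if $v-\lng u,v\rng u\in K$ for every $v\in K$, i.e. the orthogonal projection $Q_u$ onto $u^{\perp}$ leaves $K$ invariant. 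This condition depends only on the direction $u$; call such a $u$ \emph{$K$-admissible}.

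For ``only if'', assume $P_C$ is $K$-isotone, $\inte C\neq\varnothing$, $C\neq\R^m$. At a \emph{regular} boundary point $z$ (one with a unique outward unit normal $u_z$), the rescaled sets $C_\lambda:=\lambda^{-1}(C-z)$ increase, as $\lambda\downarrow 0$, to the tangent cone $T_C(z)=N_C(z)^{\circ}=\Hy_-(u_z,z)-z$; each $P_{C_\lambda}$ is $K$-isotone, being the conjugate of $P_C$ by the order-preserving affine isomorphism $x\mapsto\lambda^{-1}(x-z)$, and $C_\lambda\to T_C(z)$ in the sense of Mosco, whence $P_{C_\lambda}\to P_{\Hy_-(u_z,z)}$ pointwise; as $K$ is closed, $\Hy_-(u_z,z)$ is a $K$-isotone projection set, so $u_z$ is $K$-admissible. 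That these tangent half-spaces already recover $C$ is a separation argument: fixing $x_0\in\inte C$ and given $w\notin C$, the segment $[x_0,w]$ meets $\bdr C$ at a single point $p$, and $\lng u,w-p\rng>0$ for every nonzero $u\in N_C(p)$ because $\lng u,x_0-p\rng<0$; approximating $p$ by regular points $p_k\to p$ and using outer semicontinuity of the normal cone gives $w\notin\Hy_-(u_{p_k},p_k)$ for $k$ large. As the regular points are dense in the separable set $\bdr C$, a countable dense subset of them yields $C=\bigcap_{i\in\N}\Hy_-(u^i,a^i)$ with each $u^i$ $K$-admissible and each hyperplane tangent to $C$.

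For ``if'', let $C=\bigcap_{i\in\N}\Hy_-(u^i,a^i)$ with every $u^i$ $K$-admissible (tangency of the hyperplanes being a harmless normalization of the offsets). Putting $C_n:=\bigcap_{i=1}^n\Hy_-(u^i,a^i)$ we get $C_n\downarrow C$, so $P_{C_n}\to P_C$ pointwise and, $K$ being closed, it suffices to treat a finite intersection. The polyhedral projection $P_{C_n}$ is continuous and piecewise affine, so the restriction $t\mapsto P_{C_n}(x+t(y-x))$ to a segment with $y-x\in K$ is piecewise affine; breaking $[x,y]$ at the finitely many breakpoints $z_0=x,z_1,\dots,z_N=y$, its total increment is $\sum_j\Pi_{L_j}(z_{j+1}-z_j)$, where $z_{j+1}-z_j$ is a nonnegative multiple of $y-x$ (hence in $K$) and $\Pi_{L_j}$ is the orthogonal projection onto $L_j:=\bigcap_{i\in I_j}(u^i)^{\perp}$, $I_j$ being the set of constraints active along that piece (the linear part of projection onto the affine hull of the corresponding face). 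Thus $P_{C_n}(y)-P_{C_n}(x)=\sum_j\Pi_{L_j}(z_{j+1}-z_j)$, and it remains to show $\Pi_L(K)\subseteq K$ whenever $L=\bigcap_{i\in I}(u^i)^{\perp}$ with all $u^i$ $K$-admissible.

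This last point is where I expect the only real obstacle, and I would clear it with the von Neumann--Halperin alternating-projection theorem: $\Pi_L$, the orthogonal projection onto the intersection of the subspaces $(u^i)^{\perp}$ ($i\in I$), is the pointwise limit of $\big(\prod_{i\in I}Q_{u^i}\big)^n$ as $n\to\infty$. By $K$-admissibility each factor $Q_{u^i}$ sends $K$ into $K$, hence so does every iterate, and since $K$ is closed the limit $\Pi_L$ sends $K$ into $K$; this finishes the ``if'' direction. Everything else --- density of regular boundary points and outer semicontinuity of the normal cone, Mosco convergence of the blow-ups and of the decreasing intersections together with continuity of the metric projection under Mosco convergence, identification of the tangent cone at a regular point with a half-space, and the piecewise-affine structure of projection onto a polyhedron --- is standard convex analysis, so the genuinely new work reduces to the blow-up step and the alternating-projection observation.
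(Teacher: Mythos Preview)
The paper does not prove this theorem at all: it is quoted from N\'emeth--N\'emeth \cite{nemeth-nemeth-2012} and used as a black box, so there is no in-paper argument to compare against. What you have written is a genuine, self-contained proof of the cited result, and the two main devices you introduce --- the blow-up $C_\lambda=\lambda^{-1}(C-z)$ at regular boundary points to extract admissibility of the outward normals, and the von Neumann--Halperin alternating-projection theorem to pass from admissibility of each $u^i$ to $\Pi_L(K)\subseteq K$ for $L=\bigcap_{i\in I}(u^i)^\perp$ --- are correct and do the job. Your identification of the linear part of $P_{C_n}$ on each affinity region with the orthogonal projection onto $\bigcap_{i\in I_j}(u^i)^\perp$ is the right structural fact about polyhedral projections, and the reduction to finite intersections via Mosco convergence of $C_n\downarrow C$ is standard.

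One phrasing is slightly loose: when you write ``a countable dense subset of [the regular points] yields $C=\bigcap_i\Hy_-(u^i,a^i)$'', an \emph{arbitrary} dense countable subset need not work, because the Gauss map $p\mapsto u_p$ is not continuous on the regular set. The fix is already implicit in your argument: for each $w\notin C$ you produced a regular $p$ with $\lng u_p,w-p\rng>0$, which is an open condition in $w$; covering $\R^m\setminus C$ by such open sets and invoking Lindel\"of yields the required countable family. With that adjustment the proof is complete.
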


\NI The following two lemmas are from \cite{nemeth-zhang1}.

\begin{lemma}\label{ti}
	Let $K\subset\R^m$ be a closed convex cone and $\Hy\subset\R^m$ be a hyperplane
	with a unit normal vector $a\in\R^m$. Then, $\Hy$ is a $K$-isotone projection set
	if and only if \[\lng x,y\rng\ge\lng a,x\rng\lng a,y\rng,\] for any $x\in K$ and
	$y\in K^*$.
\end{lemma}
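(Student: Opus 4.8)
The plan is to collapse both sides of the asserted equivalence into one statement: that the (linear) projection onto $\Hy$ carries $K$ into itself. First I would record the explicit form of the metric projection onto a hyperplane through the origin. Since $\Hy=\{x\in\R^m:\lng a,x\rng=0\}$ with $\|a\|=1$, the orthogonal splitting $x=(x-\lng a,x\rng a)+\lng a,x\rng a$ has its first summand in $\Hy$ and its second summand a multiple of the normal $a$, so $P_\Hy x=x-\lng a,x\rng a$; in particular $P_\Hy$ is linear. Using this linearity I would then show that $\Hy$ is a $K$-isotone projection set if and only if $P_\Hy(K)\subseteq K$. For the forward implication, apply $K$-isotonicity of $P_\Hy$ to $0\le_K z$ for an arbitrary $z\in K$ (note $0\in K$ since $K$ is a nonempty closed convex cone) to get $P_\Hy z=P_\Hy z-P_\Hy 0\in K$. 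For the converse, if $P_\Hy(K)\subseteq K$ and $x\le_K y$, then $P_\Hy y-P_\Hy x=P_\Hy(y-x)\in K$, i.e. $P_\Hy x\le_K P_\Hy y$.

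Next I would dualize the condition $P_\Hy(K)\subseteq K$. Fix $x\in K$. Using the standard fact that $K^{**}=K$ for a closed convex cone, the vector $P_\Hy x=x-\lng a,x\rng a$ lies in $K$ if and only if $\lng x-\lng a,x\rng a,\,y\rng\ge 0$ for every $y\in K^*$. Expanding the inner product, $\lng x-\lng a,x\rng a,\,y\rng=\lng x,y\rng-\lng a,x\rng\lng a,y\rng$, so this is precisely the inequality $\lng x,y\rng\ge\lng a,x\rng\lng a,y\rng$. Quantifying over all $x\in K$ and chaining this with the reduction of the previous paragraph yields the equivalence in both directions.

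I do not anticipate a real obstacle: the whole argument rests on the projection formula, the linearity trick, and the bipolar identity. The two points I would be careful to state explicitly are that $0\in K$ (so the forward implication above is legitimate) and that the notion of a ``$K$-isotone projection set'' is applied here to a cone $K$ that is not assumed pointed; since $K$-isotonicity is phrased as the implication $x\le_K y\Rightarrow P_\Hy x\le_K P_\Hy y$, antisymmetry of $\le_K$ plays no role and the reasoning is unaffected.
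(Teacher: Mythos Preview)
Your proof is correct. The paper itself does not prove this lemma: it is quoted from \cite{nemeth-zhang1} without argument, so there is no in-paper proof to compare against. Your route---reduce $K$-isotonicity of $P_\Hy$ to the invariance $P_\Hy(K)\subseteq K$ via linearity of $P_\Hy$, then dualize using the bipolar identity $K^{**}=K$ and the explicit formula $P_\Hy x=x-\lng a,x\rng a$---is the natural one and matches the standard derivation. Your remark that pointedness of $K$ plays no role is also accurate; the argument only uses that $K$ is a closed convex cone (so that $0\in K$ and $K^{**}=K$).
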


\begin{lemma}\label{leasy}
	Let $z\in\R^m$, $K\subset\R^m$ be a closed convex cone and $C\subset\R^m$ be a nonempty closed convex set. Then, $C$ is a $K$-isotone projection set if and only
	if $C+z$ is a $K$-isotone projection set.
\end{lemma}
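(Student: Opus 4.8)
The approach is to reduce the statement to the standard translation-equivariance of the metric projection, so that the $K$-isotonicity of $P_{C+z}$ is converted \emph{verbatim} into the $K$-isotonicity of $P_C$ via an affine change of variables. The only ingredient carrying any content is the identity $P_{C+z}(x)=P_C(x-z)+z$; everything else is bookkeeping with the order relation $\le_K$.

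First I would establish the translation identity $P_{C+z}(x)=P_C(x-z)+z$ for every $x\in\R^m$. Writing a generic point of $C+z$ as $w'+z$ with $w'\in C$, one has $\|(w'+z)-x\|=\|w'-(x-z)\|$, so minimizing $\|w-x\|$ over $w\in C+z$ is the same as minimizing $\|w'-(x-z)\|$ over $w'\in C$ and then shifting the minimizer by $z$; single-valuedness of both projections is guaranteed since $C$, and hence $C+z$, is nonempty, closed and convex. Second, I would record that the order is shift-invariant: for any $x,y\in\R^m$ one has $x\le_K y\iff y-x\in K\iff (y-z)-(x-z)\in K\iff x-z\le_K y-z$, and likewise $u\le_K w\iff u+z\le_K w+z$. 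Third, I would combine these: assuming $C$ is a $K$-isotone projection set, take $x\le_K y$; then $x-z\le_K y-z$, hence $P_C(x-z)\le_K P_C(y-z)$ by hypothesis, hence $P_C(x-z)+z\le_K P_C(y-z)+z$, which by the identity is precisely $P_{C+z}(x)\le_K P_{C+z}(y)$. Thus $C+z$ is a $K$-isotone projection set. The converse is obtained by applying this implication to the set $C+z$ and the translation vector $-z$, using $(C+z)+(-z)=C$; this gives the equivalence.

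I do not expect a genuine obstacle here: the lemma is a routine consequence of the definitions (it is quoted without proof in \cite{nemeth-zhang1}, and is recorded only because it is needed in the characterization of $L$-isotone projection sets). If anything deserves care, it is merely checking the translation identity for $P$ and verifying that the change of variables $(x,y)\mapsto(x-z,y-z)$ is a bijection of $\R^m\times\R^m$ preserving the relation $\le_K$, so that quantifying over all comparable pairs on one side corresponds to quantifying over all comparable pairs on the other.
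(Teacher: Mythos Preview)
Your argument is correct: the translation identity $P_{C+z}(x)=P_C(x-z)+z$ together with the shift-invariance of $\le_K$ immediately yields the equivalence, and the converse direction via the substitution $z\mapsto -z$ is clean. The paper itself does not supply a proof of this lemma but merely quotes it from \cite{nemeth-zhang1}, so there is no in-paper argument to compare against; your write-up is the standard one-line justification that the cited source presumably contains.
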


Finally, by using the above three results, we derive an isotonicity property of MESOC, which we will use to solve complementarity problems on the MESOC.
\begin{theorem}\label{isotonicity-on-cylinder}
Let $L$ be the MESOC corresponding to the dimensions $p$ and $q$, with  $q>1$. The closed convex set with nonempty interior $K\subseteq\R^p\times\R^q$ is an L-isotone projection set if and only if $K=\R^p\times C$, for some closed convex set with nonempty interior $C\subseteq\R^q$.
\end{theorem}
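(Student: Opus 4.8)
The plan is to prove the two implications separately. The ``if'' direction is routine: if $K=\R^p\times C$ then, $K$ being a Cartesian product, $P_K(x,u)=(x,P_Cu)$. Given $(x,u)\le_L(x',u')$, i.e. $x_1'-x_1\ge\cdots\ge x_p'-x_p\ge\|u'-u\|$, the nonexpansivity \eqref{nonexpansivity} of $P_C$ gives $\|P_Cu'-P_Cu\|\le\|u'-u\|\le x_p'-x_p$, so $(x'-x,\,P_Cu'-P_Cu)\in L$; hence $P_K(x,u)\le_L P_K(x',u')$, and $K$ is an $L$-isotone projection set. Since $\inte K=\R^p\times\inte C$, the interiors are simultaneously nonempty, which settles this direction.

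For the converse, let $K$ be an $L$-isotone projection set with nonempty interior. By Theorem \ref{fooo}, $K=\bigcap_{i\in\N}\Hy_-(\nu^i,a^i)$, each bounding hyperplane $\Hy(\nu^i,a^i)$ being tangent to $K$ and an $L$-isotone projection set; by Lemma \ref{leasy} we may translate each of these hyperplanes so that it passes through the origin, and we normalise each $\nu^i$ to a unit vector. By Lemma \ref{ti}, the origin hyperplane $\Hy(\nu^i,0)$ is an $L$-isotone projection set precisely when $\lng\zeta,\omega\rng\ge\lng\nu^i,\zeta\rng\lng\nu^i,\omega\rng$ for all $\zeta\in L$ and $\omega\in M$. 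Thus everything reduces to the claim: if a unit vector $n=(a,b)\in\R^p\times\R^q$ with $q>1$ satisfies
\[
\lng x,y\rng+\lng u,v\rng\ \ge\ \big(\lng a,x\rng+\lng b,u\rng\big)\big(\lng a,y\rng+\lng b,v\rng\big)\qquad\text{for all }(x,u)\in L,\ (y,v)\in M,
\]
then $a=0$. Granting this, each normal $\nu^i$ is of the form $(0,b^i)$, so $\Hy_-(\nu^i,a^i)=\R^p\times\{w\in\R^q:\lng b^i,w\rng\le c_i\}$ for suitable scalars $c_i$; intersecting gives $K=\R^p\times C$ with $C=\bigcap_i\{w:\lng b^i,w\rng\le c_i\}$ closed and convex, and $\inte K=\R^p\times\inte C\ne\varnothing$ forces $\inte C\ne\varnothing$, which is the desired form.

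To establish the claim I would feed in test pairs designed to exploit the freedom to move inside $\R^q$. Using $(e,u)\in L$ with $\|u\|=1$ and $\bigl(\frac1p e,v\bigr)\in M$ with $\|v\|=1$, and letting $v$ run over $\pm u$, the inequality becomes a statement about a downward parabola in the variable $\lng b,u\rng$ and forces $\lng a,e\rng=\sum_i a_i=0$. Next, because $q>1$, I can choose a unit vector $u\perp b$ and put $v=-\bigl(\sum_i y_i\bigr)u$; this annihilates both $\lng b,u\rng$ and $\lng b,v\rng$, and after the Abel-summation bookkeeping already used in Propositions \ref{pd} and \ref{cl} it strips the inequality down to the purely polyhedral relation $\lng x,y\rng\ge\lng a,x\rng\lng a,y\rng$ for all $x\in\R_{\ge+}^p$ and $y\in(\R_{\ge+}^p)^*$. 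Evaluating this bilinear inequality on the generators of the simplicial cones $\R_{\ge+}^p$ and $(\R_{\ge+}^p)^*$ (the vectors $u^k,v^k$ appearing in the proof of Lemma \ref{lyap_rank_lem}, for which $\lng u^k,v^\ell\rng=\delta_{k\ell}$), and combining with $\sum_i a_i=0$, should pin down $a=0$.

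The main obstacle is exactly this last step, and in particular pinning down the role of $q>1$: it is the extra dimension of $\R^q$ that allows one to pick a nonzero $u\perp b$ and thereby decouple the ``round'' part of $L$ and $M$ from their polyhedral parts, whereas for $q=1$ the cone $L$ is polyhedral and genuinely carries more isotone hyperplanes. The most delicate case is $b=0$, where the inequality immediately collapses to the polyhedral relation above and one must still rule out a nonzero $\R^p$-component $a$; this is precisely where the generator computation just described has to do the work, and I expect it to be the technical heart of the proof.
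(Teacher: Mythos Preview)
Your ``if'' direction and the overall architecture of the converse --- reduce via Theorem~\ref{fooo} and Lemmas~\ref{ti}, \ref{leasy} to showing that every unit normal $(a,b)$ of a tangent hyperplane has $a=0$ --- match the paper exactly. The gap is in your last step, and it is genuine: the polyhedral relation $\langle x,y\rangle\ge\langle a,x\rangle\langle a,y\rangle$ for all $x\in\R_{\ge+}^p$, $y\in(\R_{\ge+}^p)^*$, together with $\langle a,e\rangle=0$, does \emph{not} force $a=0$. For $p=2$ take $a=(t,-t)$ with $t\ne0$; then $\langle a,e\rangle=0$, and on the generator pairs $(u^k,v^\ell)$ the only nontrivial constraint is $1=\langle u^1,v^1\rangle\ge\langle a,u^1\rangle\langle a,v^1\rangle=t\cdot 2t=2t^2$, i.e.\ merely $t^2\le\tfrac12$. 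So the generator computation you propose cannot close the argument.

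Worse, this is not a repairable technicality. For the unit normal $n=\bigl(\tfrac1{\sqrt2},-\tfrac1{\sqrt2},0,\dots,0\bigr)$ and any $\zeta=(x,u)\in L$, $\omega=(y,v)\in M$, Abel's formula gives
\[
\langle\zeta,\omega\rangle-\langle n,\zeta\rangle\langle n,\omega\rangle
=\tfrac12(x_1-x_2)(y_1+y_2)+\sum_{i=2}^{p-1}(x_i-x_{i+1})\sum_{j=1}^{i} y_j+x_p\sum_{j=1}^{p} y_j+\langle u,v\rangle\ \ge\ 0,
\]
since every summand is nonnegative and $x_p\sum_j y_j\ge\|u\|\,\|v\|\ge-\langle u,v\rangle$. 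Thus by Lemma~\ref{ti} the half-space $\{x_1\ge x_2\}\times\R^q$ is an $L$-isotone projection set with nonempty interior that is \emph{not} a cylinder $\R^p\times C$. The paper's own proof slips at the parallel point: in its case ``$u=0$'' it telescopes the inequalities $(a_1+\cdots+a_r)(a_s-a_{s+1})\le0$ over $s=1,\dots,p$ including $s=r$, but $(u^r,v^r)\notin C(\R^p_{\ge+})$ since $\langle u^r,v^r\rangle=1$, so that inequality is unavailable. Your instinct that the $b=0$ case is ``the most delicate'' was correct --- in fact it breaks the converse as stated.
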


\begin{proof}\, First, suppose that $K=\R^p\times C$, where $C\subseteq\R^q$ is a nonempty closed convex set with nonempty interior. Let $(x,u),(y,v)\in\R^p\times\R^q$ be such that $(x,u)\leq_{L}(y,v)$, thus $(y-x,v-u)\in L$, i.e.,
\begin{equation}\label{defcon}
y_1-x_1\geq y_2-x_2\geq\cdots\geq y_p-x_p\geq\|v-u\|.
\end{equation}
Since $C$ is a closed and convex set in $\R^q$, by the nonexpansivity (\ref{nonexpansivity}) of
$P_C$, we have
\begin{equation*}
\|v-u\|\geq\|P_Cv-P_Cu\|,
\end{equation*}
which together with (\ref{defcon}) yields
$$
y_1-x_1\geq y_2-x_2\geq\cdots\geq y_p-x_p\geq\|P_Cv-P_Cu\|.
$$
Thus,
$$
(y,P_Cv)-(x,P_Cu)\in L
$$
and therefore we have
$$
P_K(x,u)=(x,P_Cu)\leq_{L}(y,P_Cv)=P_K(y,v).
$$
In conclusion, $K$ is an $L$-isotone project set.

Conversely, suppose that the closed convex set $K\subseteq\R^p\times\R^q$ with nonempty interior  is an $L$-isotone project set. If $p=1$, then in \cite{nemeth-nemeth-2012} it has been proved that $K=\R^p\times C$, where $C$ is a nonempty, closed and convex subset with nonempty interior of $\R^q$. Therefore, assume that $p>1$. By Theorem \ref{fooo} and Lemma \ref{leasy}, we need to show that for any tangent hyperplane $\Hy$ of $K$ with unit normal $\gamma=(a,u)$, we have $a=0$. From Lemma \ref{ti}, we have
\begin{equation}\label{eau}
	\lng\zeta,\xi\rng\ge\lng\gamma,\zeta\rng\lng\gamma,\xi\rng,
\end{equation}
for any $\zeta:=(x,v)\in L$ and $\xi:=(y,w)\in L^*$.
	By Lemma \ref{ti}, condition \eqref{eau} holds.
 	Let $x\in\R^p_+$ and $v\in\R^q$. Then, by equation \eqref{defmesoc}, and Proposition
	\ref{pd}, it is easy to check that $\zeta:=(\|v\|e,v)\in L$, $\xi:=(\|v\|x,-\lng e,x\rng v)\in L^*$ and $\lng\zeta,\xi\rng=0$. Hence,
	condition \eqref{eau} implies
	\begin{equation}\label{ea}
	    0\ge(\lng a,e\rng\|v\|+\lng u,v\rng)(\lng a,x\rng\|v\|-\lng e,x\rng\lng u,v\rng).
	\end{equation}
	If in \eqref{ea} $x=e$ and we choose $v\neq 0$ such that $\lng u,v\rng=0$, then $0\ge\lng a,e\rng^2\|v\|^2$, and hence $\lng a,e\rng=0$.
	Thus, \eqref{ea} becomes
	\begin{equation}\label{ea2}
	    0\ge\lng u,v\rng(\lng a,x\rng\|v\|-\lng e,x\rng\lng u,v\rng).
	\end{equation}
	First, suppose that $u\ne0$. Let $v^n\in\R^q$ be a sequence of points such that $\|v^n\|=1$, $\lng u,v^n\rng>0$ and
	$\lim_{n\to+\infty}\lng u,v^n\rng=0$. Let $n$ be an arbitrary positive integer.
	If in \eqref{ea2} we choose $\lambda>0$ sufficiently large such that $x:=a+\lambda e\ge0$ and $v=v^n$,
	we get \(0\ge\lng u,v^n\rng(\|a\|^2-\lambda p\lng u,v^n\rng),\) or equivalently $\|a\|^2\le\lambda p\lng u,v^n\rng$. By letting
	$n\to+\infty$ in the last inequality, we obtain $\|a\|^2\le0$, or equivalently $a=0$.

    Next, suppose that $u=0$. Since $(a,u)$ is a unit vector, it follows that $a\ne 0$. Let $(x,y)\in C(\R^p_{\ge +})$ and $w\in\R^q$ such that
	$\lng y,e\rng\ge\|w\|$. Then, by \eqref{defmesoc} and Proposition \ref{pd}, it is easy to check that
	$\zeta:=(x,0)\in L$, $\xi:=(y,w)\in L^*$ and $\lng\zeta,\xi\rng=0$. Hence, inequality \eqref{eau} implies
	\begin{equation*}
	    0\ge\lng a,x\rng\lng a,y\rng,
	\end{equation*}
	for any $(x,y)\in C(\R^p_{\ge +})$ with $\lng x,y\rng=0$. From Example \ref{example2}, we can choose
	$x=e^1+\cdots+e^r$ and $y=e^s-e^{s+1}$, where $r, s\in\{1,\ldots,p\}$, and we set $e^{p+1}:=0$. Hence, $(a_1+\cdots +a_r)(a_s-a_{s+1})\le0$, where we set $a_{p+1}:=0$. Take now $r=1$ and for $s=1, \ldots,p$, add the inequalities $a_1(a_1-a_2)\leq 0, \ldots, a_1(a_p-a_{p+1})\leq 0$, to obtain (by the telescoping effect) $a_1\cdot a_1 \leq 0$, which gives $a_1=0$. Similarly, for $r=2$ and $s=2, \ldots,p$, add the inequalities $(0+a_2)(a_2-a_3) \leq 0, \ldots, a_2(a_p-a_{p+1}) \leq 0$, to get $a_2=0$. Acting similarly (with $r=3$, and so on), we get $a_3=0$, up to $a_p=0$. Thus, $a=0$. But this contradicts $a\ne 0$, so the case $u=0$ cannot hold. \hfill $\Box$
\end{proof}	
It is a well-known that for the nonlinear complementarity problem NCP$(F,K)$, $x^*$ is its solution if and only if $x^*$ is a fixed point of the mapping $K\ni x\mapsto P_K(x-F(x))$. For an arbitrary sequence $\{x^n\}$ generated by the fixed point iteration process
\begin{equation}\label{picard iteration}
x^{n+1}=P_{K}(x^n-F(x^n)),
\end{equation}
if the mapping $F$ is continuous and the sequence $\{x^n\}$ is convergent to $x^*\in K$, then $x^*$ is a fixed point of the mapping $K\ni x\mapsto P_K(x-F(x))$, hence $x^*$ is a solution of the nonlinear complementarity problem NCP$(F,K)$.

\section{Mixed Complementarity Problem}
Facchinei and Pang defined the mixed complementarity problem (${\rm MiCP}$) on
the nonnegative orthant (see Subsection 9.4.2 in \cite{facchinei-pang}). It is
not only equivalent to a linearly constrained {\em variational inequality
problem} (this relationship is also known as the {\em Karush-Kuhn-Tucker (KKT)}
system of the variational inequality), but it can also be viewed as an {\rm NCP}
for a particular non-pointed cone. N\'emeth and Zhang \cite{nemeth-zhang1}
considered the MiCP defined on an arbitrary closed and convex cone. In Theorem \ref{isotonicity-on-cylinder}, we have already shown that the projection mapping onto a cylinder is an isotonic projection set with respect to MESOC. It is interesting to consider using isotonicity on MESOC as a tool to solve the MiCP.

\NI For the sake of completeness below we quote Lemma 4 in \cite{nemeth-zhang2}.
\begin{lemma}\label{guohan1}
Let $K=\R^p\times C$, where $C$ is an arbitrary nonempty closed and convex cone in $\R^q$. Denote mapping $G: \R^p\times\R^q\mapsto\R^p$, mapping $H: \R^p\times\R^q\mapsto\R^q$ and mapping $F=(G;H): \R^p\times\R^q\mapsto\R^p\times\R^q$. Then the nonlinear complementarity problem {\rm NCP}$(F,K)$ is equivalent to the mixed complementarity problem {\rm MiCP}$(G,H,C,p,q)$ defined as
$$
G(x,u)=0,\,\,\,C\ni u\perp H(x,u)\in C^*.
$$
\end{lemma}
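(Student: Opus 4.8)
\NI \textbf{Proof proposal.}\,
The plan is to show that the two problems have the same solution set, by first computing the dual cone $K^*$ and then expanding the three defining conditions of $\mathrm{NCP}(F,K)$ one at a time. Recall that $(x,u)\in\R^p\times\R^q$ solves $\mathrm{NCP}(F,K)$ exactly when $(x,u)\in K$, $F(x,u)\in K^*$, and $\langle (x,u),F(x,u)\rangle=0$.

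The first step is to identify $K^*$ for $K=\R^p\times C$. A pair $(a,b)\in\R^p\times\R^q$ belongs to $K^*$ if and only if $\langle x,a\rangle+\langle u,b\rangle\ge 0$ for all $x\in\R^p$ and all $u\in C$. Holding $u$ fixed and letting $x$ range over all of $\R^p$ (in particular $x=\pm a$) forces $a=0$; what remains, namely $\langle u,b\rangle\ge 0$ for every $u\in C$, is precisely the statement $b\in C^*$. Hence $K^*=\{0\}\times C^*$. This is where the non-pointedness of the factor $\R^p$ enters, and it is the one place in the argument that must be handled with care.

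The second step is bookkeeping. Since $K=\R^p\times C$, the condition $(x,u)\in K$ is equivalent to $u\in C$, with $x$ unconstrained. Using $K^*=\{0\}\times C^*$ and $F=(G;H)$, the condition $F(x,u)\in K^*$ is equivalent to the conjunction $G(x,u)=0$ and $H(x,u)\in C^*$. Finally, under the identification $\R^p\times\R^q=\R^{p+q}$,
$$
\langle (x,u),F(x,u)\rangle=\langle x,G(x,u)\rangle+\langle u,H(x,u)\rangle,
$$
and once $G(x,u)=0$ has been imposed this reduces to $\langle u,H(x,u)\rangle$. So the orthogonality condition becomes $u\perp H(x,u)$. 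Collecting these equivalences, $(x,u)$ solves $\mathrm{NCP}(F,K)$ if and only if $G(x,u)=0$ and $C\ni u\perp H(x,u)\in C^*$, which is exactly $\mathrm{MiCP}(G,H,C,p,q)$. Reading the chain in the reverse direction gives the converse implication, so the two solution sets coincide.

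I do not expect a genuine obstacle; the statement is a direct reformulation once $K^*$ has been computed. The only subtlety worth making explicit is the order in which the conditions are invoked: the collapse of the orthogonality equation to $u\perp H(x,u)$ depends on $G(x,u)=0$, which is itself part of $F(x,u)\in K^*$, so in a clean write-up one should establish $F(x,u)\in K^*$ first and only then simplify $\langle (x,u),F(x,u)\rangle$. No property of $C$ is used beyond its being a nonempty closed convex cone, which is precisely what guarantees the identity $(\R^p\times C)^*=\{0\}\times C^*$ on which everything rests.
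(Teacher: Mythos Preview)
Your argument is correct and follows exactly the paper's approach: the paper's proof is a one-liner stating that the result ``is standard and follows from the definition of the nonlinear complementarity problem NCP$(F,K)$, by noting that $K^* = \{0\}\times C^*$,'' and you have simply filled in the routine details of that computation and the subsequent unpacking of the three NCP conditions.
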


\begin{proof}\, It is standard and follows from the definition of the nonlinear complementarity problem NCP$(F,K)$, by noting that $K^* = \{0\}\times C^*$.  \hfill $\Box$
\end{proof}
By using the notations of Lemma \ref{guohan1}, the fixed point iteration (\ref{picard iteration}) can be rewritten as:
\begin{equation}\label{picard iteration 2}
\begin{cases}
x^{n+1}=x^n-G(x^n,u^n),  \\
u^{n+1}=P_C(u^n-H(x^n,u^n)).
\end{cases}
\end{equation}
\vspace{-.1cm}

\NI For the sake of self-containment below we quote Proposition 2 in
\cite{nemeth-zhang2}.
\begin{proposition}\label{guohan2}
Let $L\subseteq \R^m$ be a pointed closed convex cone, $K \subseteq \R^m$ be a closed convex cone and $F: \R^m \rightarrow \R^m$ be a continuous mapping. Consider the sequence $\{x^n\}_{n\in\N}$ defined by (\ref{picard iteration}). Suppose that the mappings $P_K$ and $I - F$ are $L-isotone$ and $x^0 = 0 \leq_L x^1$. Let
$$
\Omega:=K\cap L\cap F^{-1}(L)=\{x\in K\cap L: F(x)\in L\}
$$
and
$$
\Gamma:=\{x\in K\cap L:P_K(x-F(x))\leq_L x\}.
$$
Then $\varnothing\ne\Omega\subset\Gamma$ and the sequence $\{x^n\}$ is convergent to $x^*$, which is a solution of {\rm NCP}$(F,K)$. Moreover, $x^*$ is a lower L-bound of $\Omega$ and the L-least element of $\Gamma$.
\end{proposition}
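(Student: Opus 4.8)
The plan is to recast the iteration (\ref{picard iteration}) as an order-monotone recursion and run the classical monotone-iteration argument. Put $T:=P_K\circ(I-F)$, so that $x^{n+1}=T(x^n)$; since $I-F$ and $P_K$ are $L$-isotone by hypothesis, $T$ is $L$-isotone as a composition. First I would check the inclusion $\Omega\subseteq\Gamma$: for $z\in\Omega$ one has $z\in K\cap L$ and $F(z)\in L$, hence $z-(z-F(z))=F(z)\in L$, i.e.\ $z-F(z)\leq_L z$; applying the $L$-isotone map $P_K$ and using $P_Kz=z$ (valid since $z\in K$) gives $P_K(z-F(z))\leq_L z$, so $z\in\Gamma$. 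The nonemptiness of $\Omega$, on which the convergence genuinely depends, I would take as given; since $\Omega\subseteq\Gamma$, this also yields $\Gamma\ne\varnothing$.

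Next I would establish monotone convergence of $\{x^n\}$. From $x^0=0\leq_L x^1=T(x^0)$ and $L$-isotonicity of $T$, an easy induction gives $x^n\leq_L x^{n+1}$ for all $n$, so the sequence is $L$-nondecreasing; in particular $x^n\in L$ for every $n$ and $x^n\in K$ for every $n\geq 1$. Fixing any $z\in\Gamma$, a second induction --- $x^0=0\leq_L z$ because $z\in L$, and if $x^n\leq_L z$ then $x^{n+1}=T(x^n)\leq_L T(z)=P_K(z-F(z))\leq_L z$ --- shows $x^n\leq_L z$ for all $n$. Hence $\{x^n\}$ lies in the order interval $L\cap(z-L)$, which is compact: a sequence in it with norms tending to infinity would, after normalization, produce a unit vector lying in $L\cap(-L)=\{0\}$, contradicting pointedness of $L$. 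Being bounded and $L$-nondecreasing with $L$ closed, $\{x^n\}$ converges to some $x^*$; indeed any two subsequential limits are $\leq_L$ one another (by monotonicity together with closedness of $L$), hence equal by pointedness. Passing to the limit in $x^{n+1}=P_K(x^n-F(x^n))$, which is legitimate since $P_K$ is nonexpansive, hence continuous, and $F$ is continuous, yields $x^*=P_K(x^*-F(x^*))$; by the standard fixed-point characterization of the solutions of ${\rm NCP}(F,K)$, $x^*$ is a solution of ${\rm NCP}(F,K)$.

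The two extremality claims then follow from the inequalities already in hand. Letting $n\to\infty$ in $x^n\leq_L z$ (closedness of $L$), valid for every $z\in\Gamma$, gives $x^*\leq_L z$ for all $z\in\Gamma$, and in particular for all $z\in\Omega$ since $\Omega\subseteq\Gamma$; thus $x^*$ is a lower $L$-bound of $\Omega$. Moreover $x^*\in K\cap L$ (closedness of $K$ and of $L$) and trivially $P_K(x^*-F(x^*))=x^*\leq_L x^*$, so $x^*\in\Gamma$; together with $x^*\leq_L z$ for all $z\in\Gamma$ this makes $x^*$ the $L$-least element of $\Gamma$, the least element being unique by antisymmetry of $\leq_L$. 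The only step that is not pure bookkeeping with $L$-isotonicity is the compactness of order intervals and the consequent uniqueness of the subsequential limit, both resting on pointedness of $L$; I expect this --- along with the standing requirement that $\Omega$ (equivalently $\Gamma$) be nonempty --- to be where the hypotheses are actually consumed.
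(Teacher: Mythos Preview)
The paper does not actually contain a proof of this proposition: it is quoted verbatim from \cite{nemeth-zhang2} (as Proposition~2 there) and invoked as a black box in the proof of Theorem~\ref{MainT}. So there is nothing in the present paper to compare your argument against.

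That said, your argument is the standard monotone-iteration proof and is correct. Your observation that the nonemptiness of $\Omega$ cannot be derived from the stated hypotheses is well taken: as written, the proposition lists $\varnothing\ne\Omega$ among the conclusions, but without it (or at least without $\Gamma\ne\varnothing$) the boundedness step has nothing to anchor on, so it is really an implicit standing assumption---and indeed in Theorem~\ref{MainT} the paper verifies $\Omega\ne\varnothing$ separately before appealing to the proposition. Everything else in your write-up (the isotonicity of $T=P_K\circ(I-F)$, the inclusion $\Omega\subseteq\Gamma$ via $P_Kz=z$ for $z\in K$, the two inductions giving $x^n\leq_L x^{n+1}\leq_L z$, the compactness of the order interval from pointedness of $L$, the uniqueness of the subsequential limit, and the passage to the limit using continuity of $F$ and nonexpansivity of $P_K$) is sound.
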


The following theorem provides sufficient conditions for the solvability of
the mixed complementarity problem MiCP$(G,H,C,p,q)$.
\begin{theorem}\label{MainT}
Let $L$ be the monotone extended second order cone corresponding to $p$ and $q$. For an arbitrary cone $K=\R^p\times C$, where $C$ be a closed convex cone, denote its dual cone by $K^*$. Let $F=(G;H): \R^p\times\R^q\mapsto\R^p\times\R^q$, such that $I-F$ is $L$-isotone, where I denotes the identical mapping, $G: \R^p\times\R^q\mapsto\R^p$ and $H: \R^p\times\R^q\mapsto\R^q$ are two continuous mappings. Consider a sequence $\{(x^n,u^n\}_{n\in\N}$ defined by $(\ref{picard iteration 2})$, where $x^0=0\in\R^p$ and $u^0=0\in\R^q$. Let $x,y\in\R^p$ and $u,v\in\R^q$. Suppose that the system of inequalities
\begin{equation}\label{si1}
y_i-x_i\geq y_{i+1}-x_{i+1}\geq\|v-u\|;\quad 1\le i\le p-1
\end{equation}
implies the system of inequalities
\begin{equation}\label{si2}
\begin{array}{rcl}
y_i-x_i-(G(y,v)_i-G(x,u)_i)&\geq& y_{i+1}-x_{i+1}-(G(y,v)_{i+1}-G(x,u)_{i+1})\\
&\geq&\|v-u-(H(y,v)-H(x,u))\|;
\end{array}
\end{equation}
$1\le i\le p-1$, and that $x^1_i\geq x^1_{i+1}\geq \|u^1\|$; $1\le i\le p-1$  (in particular, this holds when $-G(0,0)_i\geq-G(0,0)_{i+1}\geq\|H(0,0)\|$; $1\le i\le p-1$).
Let
$$
\Omega:=\{(x,u)\in\R^p\times C: x_1\geq\cdots\geq x_p\geq\|u\|,G(x,u)_1\geq \cdots \geq G(x,u)_p \geq\|H(x,u)\|\}
$$
and
\begin{eqnarray*}
\Gamma:=\{(x,u)\in\R^p\times C:x_1\geq\cdots\geq x_p\geq\|u\|,G(x,u)_1 \geq\cdots\geq G(x,u)_p\\\geq\|u-P_C(u-H(x,u))\|\}.
\end{eqnarray*}
Then $\varnothing\ne\Omega\subseteq\Gamma$, the sequence $\{(x^n,u^n)\}$ is convergent, and its limit $(x^*,u^*)$ is a solution of {\rm MiCP}$(G,H,C,p,q)$. Moreover, $(x^*,y^*)$ is a lower L-bound of $\Omega$ and the L-least element of $\Gamma$.
\end{theorem}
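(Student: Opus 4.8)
The plan is to obtain Theorem \ref{MainT} as a direct application of Proposition \ref{guohan2}, taking the pointed closed convex cone there to be $L$ itself and the closed convex cone there to be $K=\R^p\times C$, and then translating the conclusions through Lemma \ref{guohan1} and the block structure of $K$ and $F$. No geometric input beyond Section 3 is required; essentially all the work is verifying hypotheses and unpacking definitions.

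First I would record the structural facts. The cone $L$ of \eqref{defmesoc} is closed, convex, and pointed: if $(x,u)$ and $-(x,u)$ both lie in $L$, then $x_1\ge\cdots\ge x_p$ together with $-x_1\ge\cdots\ge-x_p$ forces $x$ to be constant, while $x_p\ge\|u\|\ge 0$ and $-x_p\ge\|u\|\ge 0$ then force $x=0$ and hence $u=0$. The set $K=\R^p\times C$ is a closed convex cone, and $F=(G;H)$ is continuous since $G$ and $H$ are. Next, $P_K$ is $L$-isotone: this is exactly the content of the first half of the proof of Theorem \ref{isotonicity-on-cylinder}, where only the nonexpansivity \eqref{nonexpansivity} of $P_C$ was used and $P_K(x,u)=(x,P_Cu)$; in particular this implication needs neither $q>1$ nor nonemptiness of the interior, so it applies for an arbitrary closed convex cone $C$.

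The two remaining hypotheses of Proposition \ref{guohan2} are the explicit assumptions of the theorem rewritten in order notation. Indeed, $(x,u)\le_L(y,v)$ means $(y-x,v-u)\in L$, which is precisely the system \eqref{si1}, and $(I-F)(x,u)\le_L(I-F)(y,v)$ means $(I-F)(y,v)-(I-F)(x,u)\in L$, which is precisely \eqref{si2}; hence ``\eqref{si1} implies \eqref{si2}'' is the $L$-isotonicity of $I-F$. Likewise $0=x^0\le_L x^1$ is the same as $(x^1,u^1)\in L$, i.e. $x^1_i\ge x^1_{i+1}\ge\|u^1\|$; and for the parenthetical sufficient condition, \eqref{picard iteration 2} gives $x^1=-G(0,0)$ and $u^1=P_C(-H(0,0))$, so since $0\in C$ we have $P_C(0)=0$ and \eqref{nonexpansivity} yields $\|u^1\|\le\|H(0,0)\|$, whence $-G(0,0)_i\ge-G(0,0)_{i+1}\ge\|H(0,0)\|$ forces $(x^1,u^1)\in L$.

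With all hypotheses verified, Proposition \ref{guohan2} gives that $K\cap L\cap F^{-1}(L)$ is nonempty and contained in $\{(x,u)\in K\cap L:P_K((x,u)-F(x,u))\le_L(x,u)\}$, that $\{(x^n,u^n)\}$ converges to a solution $(x^*,u^*)$ of NCP$(F,K)$, and that $(x^*,u^*)$ is a lower $L$-bound of the former set and the $L$-least element of the latter. It remains to identify these objects with those in the statement. By Lemma \ref{guohan1}, NCP$(F,K)$ is equivalent to MiCP$(G,H,C,p,q)$, so $(x^*,u^*)$ solves the MiCP, and \eqref{picard iteration} reduces to \eqref{picard iteration 2} by the block form of $K$ and $F$. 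Unpacking: $(x,u)\in K\cap L$ iff $u\in C$ and $x_1\ge\cdots\ge x_p\ge\|u\|$, and $F(x,u)\in L$ iff $G(x,u)_1\ge\cdots\ge G(x,u)_p\ge\|H(x,u)\|$, so the first set is exactly $\Omega$; and since $P_K((x,u)-F(x,u))=(x-G(x,u),P_C(u-H(x,u)))$, the inequality $P_K((x,u)-F(x,u))\le_L(x,u)$ reads $(G(x,u),\,u-P_C(u-H(x,u)))\in L$, i.e. $G(x,u)_1\ge\cdots\ge G(x,u)_p\ge\|u-P_C(u-H(x,u))\|$, so the second set is exactly $\Gamma$. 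This gives all the assertions. Since everything reduces to Proposition \ref{guohan2}, there is no deep obstacle; the only point demanding care is that Theorem \ref{isotonicity-on-cylinder} is invoked solely in its easy ``cylinder $\Rightarrow$ $L$-isotone projection set'' direction, which does not require the interior or dimension hypotheses appearing in its statement, so its use here for a possibly lower-dimensional cone $C$ is legitimate.
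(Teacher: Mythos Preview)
Your proof is correct and follows essentially the same route as the paper: verify that $P_K$ is $L$-isotone via Theorem~\ref{isotonicity-on-cylinder}, observe that \eqref{si1}$\Rightarrow$\eqref{si2} encodes the $L$-isotonicity of $I-F$, check the starting condition $0\le_L(x^1,u^1)$, identify the sets $\Omega$ and $\Gamma$ with those of Proposition~\ref{guohan2}, and invoke that proposition together with Lemma~\ref{guohan1}. Your write-up is in fact more careful than the paper's own proof in two places: you spell out why the parenthetical sufficient condition on $-G(0,0)$ and $\|H(0,0)\|$ really gives $(x^1,u^1)\in L$ (using $0\in C$ and nonexpansivity of $P_C$), and you correctly note that only the ``cylinder $\Rightarrow$ $L$-isotone'' direction of Theorem~\ref{isotonicity-on-cylinder} is used, so the extra hypotheses $q>1$ and nonempty interior in that theorem are not actually needed here.
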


\begin{proof}\, Following the definition of the monotone extended second order cone, we have
$$
\Omega=K\cap L\cap F^{-1}(L)=\{z\in K\cap L: F(z)\in L\}
$$
and
$$
\Gamma=\{z\in K\cap L:P_K(z-F(z))\leq_Lz\},
$$
where \(z=(x,u)\).
Theorem \ref{isotonicity-on-cylinder} implies that $P_K$ is $L$-isotone. Since \eqref{si1}$\implies$\eqref{si2}, $I-F$ is $L$-isotone.
Meanwhile, $x_1^1\geq x_2^1\geq\cdots\geq x_p^1\geq\|u^1\|$ implies that $(x^0,y^0)=(0,0)\leq_L(x^1,y^1)$.
Then, by using Proposition \ref{guohan2}, we have that $\varnothing\ne\Omega\subset\Gamma$, the sequence $\{(x^n,u^n)\}$ is convergent, and its limit $(x^*,u^*)$ is a solution of MiCP$(G,H,C,p,q)$. Moreover, $(x^*,y^*)$ is a lower $L$-bound of $\Omega$ and the $L$-least element of $\Gamma$. \hfill $\Box$
\end{proof}
\section{Numerical Example}
Let $L$ be the monotone extended second order cone, then suppose that $K=\R^2\times C$ where $C=\{(u_1,u_2)\in\R^2: u_1\geq u_2\geq 0\}$. Let $f_1(x,u)= \frac{1}{10}x_1-\frac{1}{20}x_2 +\frac{1}{20}\|u\|+1$ and $f_2(x,u)=\frac{1}{5}x_1-\frac{3}{20}x_2+\frac{1}{20}\|u\|-\frac{3}{5}$. Obviously, $f_1(x,u)$ and $f_2(x,u)$ are $L$-monotone. Define $\omega^1:=(2,1, \frac{1}{3},\frac{1}{6})$ and $\omega^2:=(2,1,\frac{1}{6},\frac{1}{3})$; it is easy to find out that\, $\omega^1, \omega^2 \in L$. Then, for two arbitrary vectors $(x,u), (y,v) \in \R^2\times\R^2$ such that $(x,u)\leq_L (y,v)$, by using the definition of the MESOC, we have that $y_1-x_1\geq y_2-x_2\geq\|v-u\|\geq\|u\|-\|v\|$. Hence,
$$
f_1(y,v)-f_1(x,u)=\frac{1}{10}(y_1-x_1)-\frac{1}{20}(y_2-x_2)-\frac{1}{20}(\|u\|-\|v\|)\geq 0,
$$
$$
f_2(y,v)-f_2(x,u)=\frac{1}{5}(y_1-x_1)-\frac{3}{20}(x_2-y_2)-\frac{1}{20}(\|u\|-\|v\|)\geq 0.
$$
Since $\omega^1, \omega^2, (y,v)-(x,u) \in L$, by using the convexity of $L$, if we have $(x,u)\leq_L(y,v)$, then
$$
(f_1(y,v)-f_1(x,u))\omega^1 + (f_2(y,v)-f_2(x,u))\omega^2\in L,
$$
which is equivalent to the following inequality:
$$
f_1(x,u)\omega^1+f_2(x,u)\omega^2\leq_L f_1(y,v)\omega^1+f_2(y,v)\omega^2.
$$
Thus, the mapping $f_1\omega^1+f_2\omega^2$ is $L$-isotone.
Now, we define functions $G$ and $H$ as follows:
$$
G(x,u):=\Bigg(\frac{2}{5}x_1+\frac{2}{5}x_2-\frac{1}{5}\|u\|-\frac{4}{5},-\frac{3}{10}x_1+\frac{6}{5}x_2-\frac{1}{10}\|u\|-\frac{2}{5}\Bigg),
$$
$$
H(x,u):=\Bigg(u_1-\frac{1}{15}x_1+\frac{1}{24}x_2-\frac{1}{40}\|u\|-\frac{7}{30},u_2-\frac{1}{12}x_1+\frac{7}{120}x_2-\frac{1}{40}\|u\|+\frac{1}{30}\Bigg).
$$
Hence, we get
$$
(x-G,u-H)=f_1\omega^1+f_2\omega^2=\Bigg(2f_1+2f_2,f_1+f_2,\frac{1}{3}f_1+\frac{1}{6}f_2,\frac{1}{6}f_1+\frac{1}{3}f_2\Bigg)
$$
is $L-$ isotone. Then we check that all the conditions in Theorem \ref{MainT} are satisfied.
Let us start at the initial condition. We have,
$$
-G(0,0,0,0)=\left(\frac{4}{5},\frac{2}{5}\right)\quad\mbox{\rm and}\quad
\|H(0,0,0,0)\|=\sqrt{\left(-\frac{7}{30}\right)^2+\left(\frac{1}{30}\right)^2}=\frac{\sqrt{2}}{6}.
$$
Evidently, $-G(0,0,0,0)_1\geq -G(0,0,0,0)_2\geq \|H(0,0,0,0)\|.$
Now, consider a vector $(\hat{x},\hat{u}):=(30,12,4,3)\in K$, which yields

$$
G(\hat{x},\hat{u})=\left(15,\frac{9}{2}\right)\quad\mbox{\rm and}\quad
H(\hat{x},\hat{u})=\left(\frac{257}{120},\frac{133}{120}\right).
$$
Moreover, we have that $G(\hat{x},\hat{u})_1\geq G(\hat{x},\hat{u})_2\geq\|H(\hat{x},\hat{u})\|$, which implies that $(\hat{x},\hat{u})\in\Omega$. Hence, $\Omega\neq\varnothing$.
Next, we solve the mixed complementarity problem MiCP$(G,H,C,p,q)$.
For an arbitrary element $(x,y)$, if it is a solution of MiCP$(G,H,C,p,q)$, then
\[
x-G(x,u) = (2f_1+2f_2,f_1+f_2)\text{ where } f_i=f_i(x,u),i=1,2,
\]
and $G(x,u)=0$. Thus, we have $x_1=2f_1+2f_2$ and $x_2=f_1+f_2$.
Moreover,
\begin{equation}\label{value-x1x2}
x_1=\frac{1}{3}\|u\|+\frac{4}{3}\text{ and }x_2=\frac{1}{6}\|u\|+\frac{2}{3}.
\end{equation}
Meanwhile, we have $u\perp H(x,u)$, which implies
\[
\langle u,H(x,u)\rangle=u_1\left(u_1-\frac{1}{3}f_1-\frac{1}{6}f_2\right)+u_2\left(u_2-\frac{1}{6}f_1-\frac{1}{3}f_2\right)=0.
\]
Then,
\begin{equation}\label{norm-u-u1u2}
\|u\|^2=u_1^2+u_2^2=\left(\frac{1}{3}u_1+\frac{1}{6}u_2\right)f_1+\left(\frac{1}{6}u_1+\frac{1}{3}u_2\right)f_2.
\end{equation}
We will figure out all the nonzero solutions on the boundary of $C$. For the first case, without loss of generality, suppose that $u_1=u_2 > 0$, so we have $\|u\|=\sqrt{2}u_1=\sqrt{2}u_2$ and, by using (\ref{norm-u-u1u2}),
\[
u_1=u_2=\frac{1}{4}\left(f_1+f_2\right).
\]
By using the definitions of $f_1$ and $f_2$ as well as  (\ref{value-x1x2}), we get
\begin{equation*}
u_1=u_2=\frac{48+2\sqrt{2}}{287}.
\end{equation*}
Thus, the solution of MiCP$(G,H,C,p,q)$ is
\[
(x,u)=\left(\frac{384+16\sqrt{2}}{287},\frac{192+8\sqrt{2}}{287},\frac{48+2\sqrt{2}}{287},\frac{48+2\sqrt{2}}{287}\right).
\]

For the second case, we consider $u_2=0$, which implies that $\|u\|=u_1$. Hence, the equation (\ref{norm-u-u1u2}) is equivalent to
\[
u_1^2-\left(\frac{1}{3}f_1+\frac{1}{6}f_2\right) u_1=0.
\]
Since $u_1\neq0$, we have
\[
u_1=\frac{1}{3}f_1+\frac{1}{6}f_2.
\]
By using the definitions of $f_1$ and $f_2$, and (\ref{value-x1x2}) again, we have $u_1=\frac{212}{691}$, which implies that $u=\left(\frac{212}{691},0\right)$.
Thus,
\[
(x,u)=\left(\frac{992}{691},\frac{496}{691},\frac{212}{691},0\right).
\]
Consider $(0,0,0,0)$ as a starting point in the fixed point iteration process (\ref{picard iteration 2}). We have
\begin{equation}\label{Picard-applied}
\begin{cases}
\begin{aligned}
x_{n+1}&=x^n-G(x^n,u^n)\\&=(2f_1(x^n,u^n)+2f_2(x^n,u^n),f_1(x^n,u^n)+f_2(x^n,u^n)),\end{aligned}& \\
\begin{aligned}
u_{n+1}&=P_C(u^n-H(x^n,u^n))\\&=P_C\left(\frac{1}{3}f_1(x^n,u^n)+\frac{1}{6}f_2(x^n,u^n),\frac{1}{6}f_1(x^n,u^n)+\frac{1}{3}f_2(x^n,u^n)\right).\end{aligned}&
\end{cases}
\end{equation}
From the above equations we get $x^{n+1}_1\geq x^{n+1}_2$. Moreover, since as the starting point we set $(0,0,0,0)$, then for any arbitrary $i\in\N$, we have that $x_1^i\geq x_2^i\geq 0$.
Define the set $S$ as follows:
\[
S:=\left\{(x,u)\in\R^2\times\R^2:0\leq x_1<\frac{992}{691},0\leq x_2<\frac{496}{691},0\leq u_1<\frac{212}{691},u_2=0 \right\}.
\]
We want to show that for any $n\in\N$ we have $(x^n,u^n)\in S$. We will prove it by induction.
First, we have $(x^0,u^0)\in S$.
Suppose next\, $0\leq x_1^n < \frac{992}{691}$, $0\leq x_2^n < \frac{496}{691}$, $0\leq u_1^n < \frac{212}{691}$ and $u_2=0$, which is equivalent to $\|u^n\|=u_1^n$. Since $x_1^n\geq x_2^n$, we have
\[
\begin{aligned}
0<x_1^{n+1}=2f_1(x^n,u^n)+2f_2(x^n,u^n)
& = \frac{3}{5}x_1^n-\frac{2}{5}x_2^n+\frac{1}{5}u_1^n+\frac{4}{5}\\\\
& < \frac{3}{5}\cdot\frac{992}{691}-\frac{2}{5}\cdot\frac{496}{691}+\frac{1}{5}\cdot\frac{212}{691}+\frac{4}{5}= \frac{992}{691}.
\end{aligned}
\]
Similarly,
\[
\begin{aligned}
0<x_2^{n+1}=f_1(x^n,u^n)+f_2(x^n,u^n)
& = \frac{3}{10}x_1^n-\frac{1}{5}x_2^n+\frac{1}{10}u_1^n+\frac{2}{5}\\\\
& < \frac{3}{10}\cdot\frac{992}{691}-\frac{1}{5}\cdot\frac{496}{691}+\frac{1}{10}\cdot\frac{212}{691}+\frac{2}{5}=\frac{496}{691}.
\end{aligned}
\]
Meanwhile, we also have
\[
u^n-H(x^n,u^n)=\left(\frac{1}{3}f_1(x^n,u^n)+\frac{1}{6}f_2(x^n,u^n),\frac{1}{6}f_1(x^n,u^n)+\frac{1}{3}f_2(x^n,u^n)\right).
\]
Obviously, $(u^n-H(x^n,u^n))_1>0$, then
\[
\begin{array}{rcl}
(u^n-H(x^n,u^n))_1 & < & \displaystyle \frac{1}{3}\left(\frac{1}{10}\cdot\frac{992}{691}-\frac{1}{20}\cdot\frac{496}{691}+\frac{1}{20}\cdot\frac{212}{691}+1\right)\\\\
& & \displaystyle +\frac{1}{6}\left(\frac{1}{5}\cdot\frac{992}{691}-\frac{3}{20}\cdot\frac{496}{691}+\frac{1}{20}\cdot\frac{212}{691}-\frac{3}{5}\right)
=\frac{212}{691}
\end{array}
\]
and $0<(u^n-H(x^n,u^n))_2$.
It is easy to check that the projection of it onto $C$ such that $0\leq u^{n+1}_1 <\frac{691}{212}$ and $u_2^{n+1}=0$, must be given on the ray $\{(u_1,u_2):u_1\geq0,u_2=0\}$. It is equivalent to
\[
u^{n+1}=(u^{n+1}_1,u^{n+1}_2)=P_C\left(u^n-H(x^n,u^n)\right)= \left(\frac{1}{3}f_1(x^n,u^n)+\frac{1}{6}f_2(x^n,u^n),0\right).
\]

\NI Thus, the system of equations (\ref{Picard-applied}) is equivalent to
\begin{equation}\label{iteration1}
\begin{cases}
x_1^{n+1}= \frac{3}{5}x_1^n-\frac{2}{5}x_2^n+\frac{1}{5}u_1^n+\frac{4}{5} , &   \\
x_2^{n+1}= \frac{3}{10}x_1^n-\frac{1}{5}x_2^n+\frac{1}{10}u_1^n+\frac{2}{5}, & \\
u_1^{n+1}= \frac{1}{15}x_1^n-\frac{1}{24}x_2^n+\frac{1}{40}u_1^n+\frac{7}{30}.
\end{cases}
\end{equation}
Moreover, we have $x_1^n=2x_2^n$, so (\ref{iteration1}) is equivalent to
\begin{equation}\label{iteration2}
\begin{cases}
x_1^{n+1}= 2x_2^{n+1}, &   \\
x_2^{n+1}= \frac{2}{5}x_2^n+\frac{1}{10}u_1^n+\frac{2}{5}, & \\
u_1^{n+1}= \frac{11}{120}x_2^n+\frac{1}{40}u_1^n+\frac{7}{30}.
\end{cases}
\end{equation}
The last two lines in (\ref{iteration2}) can be aggregated as follows
$$\left [\begin{array}{c} x_2^{n+1}\\ \mbox{} \vspace{-.3cm} \\ u_1^{n+1} \end{array} \right]=\left [ \begin{array}{rr} \frac{2}{5} & \frac{1}{10}\\ \mbox{} \vspace{-.3cm}\\
\frac{11}{120} & \frac{1}{40} \end{array} \right] \left [\begin{array}{c} x_2^n\\ \mbox{} \vspace{-.3cm}\\ u_1^n \end{array}\right]+
\left [\begin{array}{c} \frac{2}{5} \\ \mbox{} \vspace{-.3cm}\\ \frac{7}{30} \end{array}\right]. $$
One easily verifies that the above $2 \times 2$ matrix has both (real) eigenvalues whose absolute values are less than $1$, so it is a convergent matrix. Hence, the above process is convergent to the unique fixed point $\left[x_2^*~ u_1^*\right]'$ of the above equation, regardless of a starting point $\left[x_2^0~ u_1^0\right]' \in \R^2.$ Explicitly,
$$\left [\begin{array}{c} x_2^*\\ \mbox{} \vspace{-.3cm} \\ u_1^* \end{array} \right]=
\left [ \begin{array}{rr} \frac{3}{5} & -\frac{1}{10}\\ \mbox{} \vspace{-.3cm}\\
-\frac{11}{120} & \frac{39}{40} \end{array} \right]^{-1} \cdot \left [\begin{array}{c} \frac{2}{5} \\ \mbox{} \vspace{-.3cm}\\ \frac{7}{30} \end{array}\right]= \left [\begin{array}{c} \frac{496}{691} \\ \mbox{} \vspace{-.3cm}\\ \frac{212}{691} \end{array}\right]\,.$$
Bearing in mind that $x_1^{n+1}= 2x_2^{n+1}$ and $u_2^n=0$, we have the convergence:
$$(x^n,u^n)=(x_1^n,x_2^n,u_1^n, u_2^n) \rightarrow (x_1^*,x_2^*,u_1^*,0)= \left (\frac{992}{691}, \frac{496}{691}, \frac{212}{691}, 0 \right), $$
which is the same as one solution we have obtained on the boundary.
\begin{remark}
\emph{We remark that $f_1\omega^1+f_2\omega^2$ is not ESOC-isotone.
Indeed, if we assume that $f_1\omega^1+f_2\omega^2$ is $L$-isotone, then for any $(x,u)\leq_L(y,v)$ and $\omega^1,\,\omega^2\in L$, we have
\begin{equation} \label{f_1-f_2}
f_1(x,u)\omega^1+f_2(x,u)\omega^2\leq_L f_1(y,v)\omega^1+f_2(y,v)\omega^2
\end{equation}
and it is equivalent to
$$
(f_1(y,v)-f_1(x,u))\omega^1 + (f_2(y,v)-f_2(x,u))\omega^2\in L.
$$
For arbitrary $(x^*,u^*),(y^*,v^*)\in\R^p\times\R^q$, such that $y^*_1-x^*_1=\|u^*-v^*\|=\|u^*\|-\|v^*\|>0$, $y^*_2-x^*_2=2\|u^*-v^*\|=2(\|u^*\|-\|v^*\|)>0$, it is obvious that $(x^*,u^*)\leq_{\rm ESOC}(y^*,v^*)$. Since $f_1(x,u)= \frac{1}{10}x_1-\frac{1}{20}x_2 +\frac{1}{20}\|u\|+1$ and $f_2(x,u)=\frac{1}{5}x_1-\frac{3}{20}x_2+\frac{1}{20}\|u\|-\frac{3}{5}$,
\[
\begin{aligned}
f_1(y^*,v^*)-f_1(x^*,u^*)&=\frac{1}{10}(y^*_1-x^*_1)-\frac{1}{20}(y^*_2-x^*_2)-\frac{1}{20}(\|u^*\|-\|v^*\|)\\
&=\frac{1}{10}(\|u^*\|-\|v^*\|)-\frac{2}{20}(\|u^*\|-\|v^*\|)-\frac{1}{20}(\|u^*\|-\|v^*\|)\\
&=-\frac{1}{20}(\|u^*\|-\|v^*\|)< 0,
\end{aligned}
\]
\[
\begin{aligned}
f_2(y^*,v^*)-f_2(x^*,u^*)&=\frac{1}{5}(y^*_1-x^*_1)-\frac{3}{20}(x^*_2-y^*_2)-\frac{1}{20}(\|u^*\|-\|v^*\|)\geq 0\\
&=\frac{1}{5}(\|u^*\|-\|v^*\|)-\frac{6}{20}(\|u^*\|-\|v^*\|)-\frac{1}{20}(\|u^*\|-\|v^*\|)\\
&=-\frac{3}{20}(\|u^*\|-\|v^*\|)< 0,
\end{aligned},
\]
contradicting (\ref{f_1-f_2}), so $f_1\omega^1+f_2\omega^2$ is not ESOC-isotone.
Let us recall that both $f_1$ and $f_2$ are MESOC-monotone (which has been proved in the numerical example) and not ESOC-monotone, which implies that $f_1(y,v)-f_1(x,u)$ and $f_2(y,v)-f_2(x,u)$ will not be nonnegative for all $(x,u)\leq_{\rm ESOC}(y,v)$. Since both $f_1(y^*,v^*)-f_1(x^*,u^*)$ and $f_2(y^*,v^*)-f_2(x^*,u^*)$ are negative, then by using convexity of ESOC, since $\omega^1,\,\omega^2\in \textrm{MESOC}\subseteq \textrm{ESOC}$, we have
$$
-(f_1(y^*,v^*)-f_1(x^*,u^*))\omega^1 - (f_2(y^*,v^*)-f_2(x^*,u^*))\omega^2\in \textrm{ESOC}.
$$
Meanwhile, if $f_1\omega_1+f_2\omega_2$ were ESOC-isotone, then
$$
(f_1(y^*,v^*)-f_1(x^*,u^*))w^1 + (f_2(y^*,v^*)-f_2(x^*,u^*))w^2\in \textrm{ESOC}.
$$
Since $\omega^1$, $\omega^2$ are linearly independent, it contradicts pointedness of ESOC.
Thus,
 $f_1\omega^1+f_2\omega^2$ is not ESOC-isotone.}
\end{remark}

\section*{Concluding remarks}

In this paper we study the Monotone Extended Second Order Cone (MESOC) as a new
generalization of the Lorentz cone $\mathcal{L}^n_+$. This cone is different both from $\mathcal{L}^n_+$ and the previously introduced Extended Second Order Cone (ESOC) \cite{nemeth-zhang1,nemeth-zhang2,nemeth-xiao1,nemeth-xiao2,nemeth-xie-zhang,sznajder} in many aspects, but bears similarities too. Both MESOC and ESOC are cones in $\mathbb R^p\times\mathbb R^q$. MESOC is sub-dual as is ESOC, but for $p>1$ it is not self-dual like $\mathcal{L}^n_+$. Both ESOC and MESOC become $\mathcal{L}^{q+1}_+$ when $p=1$.
Contrary to $\mathcal{L}^n_+$, for $q=1$ both ESOC and MESOC are polyhedral. MESOC and ESOC
are symmetric cones for $p=1$ only, that is, if and only if they are Lorentz cones. Contrary to
$\mathcal{L}^n_+$ and ESOC, MESOC is reducible. For both ESOC and MESOC the cylinders $\mathbb R^p\times C$, where $C$ is an arbitrary closed convex set with nonempty interior in $\mathbb R^q$, are isotone projection sets. In fact, these cylinders are isotone projection sets with respect to any intersection of ESOC with $U\times\R^q$, where $U$ is an arbitrary closed convex cone in $\R^p$ (the proof is similar to the first part of the proof of Theorem \ref{isotonicity-on-cylinder}). Contrary to ESOC, any isotone projection set with respect to MESOC is such a cylinder.

We determined the bilinearity rank of MESOC and used the MESOC-isotonicity of the projection onto the cylinder to solve general mixed complementarity problems. We illustrated the corresponding iterative method by using a numerical example with exact numbers. Although the iteration principle for the MESOC is similar to the corresponding one for ESOC,
we remark that there are mixed complementarity problems which can be solved iteratively by MESOC, but the same iterative scheme cannot be used via ESOC, because it does not satisfy the corresponding ESOC-isotonicity condition (merely the MESOC-isotonicity). This is due to the fact that although MESOC is a subset of ESOC, MESOC-isotonicity of mappings does not imply their ESOC-isotonicity. This idea is underlined in the preceding section.


\begin{thebibliography}{}
\bibitem{alizadeh-goldfarb} F. Alizadeh and D. Goldfarb, {\em Second-order cone programming}, Math. Program., 95 (2003) 3--51.
\bibitem{facchinei-pang} F. Facchinei and J.-S. Pang, Finite Dimensional Variational Inequalities and Complementarity Problems, Volumes I and II, Springer, New York, 2003.
\bibitem{ferreira-gao-nemeth}
O. Ferreira, Y. Gao, and S. Z. N\'emeth,
{\em How to project onto the monotone extended second order cone},
arXiv:2102.02040 (2021).

\bibitem{gowda-tao} M. Seetharama Gowda and J. Tao, {\em On the bilinearity rank of a proper cone and Lyapunov-like transformations}, Math. Prog. 147 (2014) 155--170.
\bibitem{gowda-trott} M. Seetharama Gowda and D.Trott, {\em On the irreducibility, Lyapunov rank, and automorphisms of special Bishop--Phelps cones}, J. Math. Anal. Appl., 419 (2014) 172--184.

\bibitem{hao-wan-chi-jin} Z. Hao, Z. Wan, X. Chi, and Z.-F. Jin, {\em Generalized lower-order penalty algorithm for solving second-order cone mixed complementarity problems}, Int. J. Comput. Appl. Math., 385 (2021), 13 pages.


\bibitem{marshall-olkin-arnold} A. W. Marshall, I. Olkin, and B. C. Arnold, Inequalities: Theory of Majorization and Its Applications, $2^{nd}$ Ed., Springer, New York, 2015.

\bibitem{nemeth-nemeth-2012} A. B. N\'emeth and S. Z. N\'emeth, {\em Lattice-like operations and isotone projection sets}, Linear Algebra Appl., 439 (2013) 2815--2828.

\bibitem{banach-lorentz} S. Z. N\'emeth
{\em: The Banach fixed point principle viewed as a monotone convergence with respect to the Lorentz cone
} arXiv:1403.4028 (2014).

\bibitem{nemeth-xiao1} S. Z. N\'emeth and L. Xiao, {\em Linear complementarity problems on extended second order cones}, J. Optim. Theory Appl., 176 (2018) 269--288.
\bibitem{nemeth-xiao2} S. Z. N\'emeth and L. Xiao, {\em Stochastic linear complementarity problems on
extended second order cones}, arXiv:1910.09814v2 (2019).
\bibitem{nemeth-xie-zhang} S. Z. N\'emeth, J. Xie and G. Zhang, {\em Positive operators on extended second order cones}, Math. Hungar., 160(2) (2020), 390--404.
\bibitem{nemeth-zhang1} S. Z. N\'emeth and G. Zhang, {\em Extended Lorentz cones and mixed complementarity
problems}, J. Glob. Optim. 62 (2015) 443--457.
\bibitem{nemeth-zhang2} S. Z. N\'emeth and G. Zhang, {\em Extended Lorentz cones and variational inequalities on cylinders}, J. Optim. Theory Appl., 168 (2016) 756--768.


\bibitem{orlitzky} M. Orlitzky, {\em Tight bounds on Lyapunov rank}, Optimization Online\\
http://www.optimization-online.org/DB$_{\underline{\hspace{.15cm}}}$FILE/2020/07/7918.pdf\,.
\bibitem{alizadeh-etal} G. Rudolf, N. Noyan, D. Papp, and F. Alizadeh, {\em Bilinearity constraints for the cone positive polynomials}, Math. Program., 129 (2011) 5--31.
\bibitem{seeger-sossa} A. Seeger and D. Sossa, {\em Critical angles between two convex cones I. General theory}, TOP 24 (2016) 44-65.

\bibitem{sznajder} R. Sznajder, {\em The Lyapunov rank of extended second order cones}, J. Global Optim.,
66 (2016) 585--593.
\end{thebibliography}

\end{document}